\tikzstyle{level 1}=[level distance=2.75cm, sibling distance=5.65cm]
\tikzstyle{level 2}=[level distance=3cm, sibling distance=2.75cm]
\tikzstyle{level 3}=[level distance=3.9cm, sibling distance=1.5cm]
\tikzstyle{bag} = [text width=10em, text centered] 
\tikzstyle{end} = [circle, minimum width=3pt,fill, inner sep=0pt]
\newcolumntype{L}{>{$}l<{$}}
\newcolumntype{C}{>{$}c<{$}}
\begin{document}
	
	\newtheorem{problem}{Problem}[section]
	\newtheorem{exercise}{Exercise}[section]
	\newtheorem{theorem}{Theorem}[section]
	\newtheorem{corollary}{Corollary}[section]
	\newtheorem{conjecture}{Conjecture}[section]
	\newtheorem{proposition}{Proposition}[section]
	\newtheorem{lemma}{Lemma}[section]
	\newtheorem{definition}{Definition}[section]
	\newtheorem{example}{Example}[section]
	\newtheorem{remark}{Remark}[section]
	\newtheorem{solution}{Solution}[section]
	\newtheorem{case}{Case}[section]
	\newtheorem{condition}{Condition}[section]
	\newtheorem{assumption}{Assumption}
	\newtheorem{note}{Note}[section]
	\newtheorem{notes}{Notes}[section]
	\frenchspacing

	\title{Defective Ramsey Numbers and Defective Cocolorings in Some Subclasses of Perfect Graphs }
	\author{Yunus Emre Demirci\footnote{Department of Mathematics, Bo\u{g}azi\c{c}i University, 34342, Bebek, Istanbul, Turkey} \hspace{0.2in} T{\i}naz Ekim \footnote{Department of Industrial Engineering, Bo\u{g}azi\c{c}i University, 34342, Bebek, Istanbul, Turkey} \hspace{0.2in}
		Mehmet Akif Y{\i}ld{\i}z\footnote{Department of Mathematics, Bo\u{g}azi\c{c}i University, 34342, Bebek, Istanbul, Turkey} } \vspace{0.25in}
	
	\maketitle
	\begin{abstract}
In this paper, we investigate a variant of Ramsey numbers called defective Ramsey numbers where cliques and independent sets are generalized to $k$-dense and $k$-sparse sets, both commonly called $k$-defective sets. We focus on the computation of defective Ramsey numbers restricted to some subclasses of perfect graphs. Since direct proof techniques are often insufficient for obtaining new values of defective Ramsey numbers, we provide a generic algorithm to compute defective Ramsey numbers in a given target graph class. We combine direct proof techniques with our efficient graph generation algorithm to compute several new defective Ramsey numbers in perfect graphs, bipartite graphs and chordal graphs. We also initiate the study of a related parameter, denoted by $c^{\mathcal G}_k(m)$, which is the maximum order $n$ such that the vertex set of any graph of order at $n$ in a class $\mathcal{G}$ can be partitioned into at most $m$ subsets each of which is $k$-defective. We obtain several values for $c^{\mathcal G}_k(m)$ in perfect graphs and cographs.
	\end{abstract}
\textbf{Keywords: }{Efficient graph generation, perfect graphs, chordal graphs, bipartite graphs, cographs}
	
	\section{Introduction}
Ramsey Theory deals with the existence of some unavoidable structures as the number of vertices in a graph grows. In (classical) Ramsey numbers, we are interested in the minimum order of a graph which guarantees the existence of a clique or an independent set of given sizes.
It is well-known that the computation of Ramsey numbers is an extremely difficult task. One way to cope with this hardness is to consider Ramsey numbers in restricted graph classes. Ramsey numbers have been computed in planar graphs \cite{5}, in graphs with bounded degree \cite{6,7,8}, and in claw-free graphs \cite{9}. More recently, this approach has been also applied to perfect graphs and their subclasses in \cite{ramseygraphclassesHeggernes}. 

Defective Ramsey numbers are a variation of (classical) Ramsey numbers where sparse and dense sets are used instead of independent sets and cliques. A \textit{$k$-sparse $j$-set} is a set $S$ of $j$ vertices of a graph $G$ such that each vertex in $S$ has at most $k$ neighbors in $S$. A \textit{$k$-dense $i$-set} is a set $D$ of $i$ vertices of a graph $G$ that is $k$-sparse in the complement of $G$; or alternatively, each vertex in $D$ misses at most $k$ vertices of $D$ in its neighborhood. A set is said to be \textit{$k$-defective} if it is either $k$-sparse or $k$-dense. The \textit{defective Ramsey number} $R_k^\mathcal{G}(i,j)$ is the smallest $ n $ such that all graphs on $n$ vertices in the class $\mathcal{G} $ have either a $k$-dense $i$-set or a $k$-sparse $j$-set. A graph is said to be \textit{extremal} for  $R_k^\mathcal{G}(i,j)=n$ if it is a graph of order $n-1$ belonging to the class $\mathcal{G}$ and having neither $k$-dense $i$-set nor $k$-sparse $j$-set. If the defectiveness level $k=0$ (in which case the subscript 0 can be omitted), then it boils down to the classical Ramsey numbers. To establish that $R_k^\mathcal{G}(i,j)=n$ for some $n$, one should show two things: i) all graphs of order at least $n$ in $\mathcal{G}$ have either a $k$-dense $i$-set or a $k$-sparse $j$-set; ii) there is at least one extremal graph for $R_k^\mathcal{G}(i,j)=n$. We note that besides being interesting for its own, finding all extremal graphs for a Ramsey number can be also helpful in establishing the next (for $i+1$ or $j+1$ where all the other parameters are the same) Ramsey numbers.

Defective Ramsey numbers have been first introduced in \cite{mynhardt} under the name of 1-dependent Ramsey numbers. Further defective Ramsey numbers have been computed in \cite{defectiveRamseyJohnChappell} and \cite{ekim1} using direct proof techniques. As expected, the approach of establishing new defective Ramsey numbers using direct proof techniques has quickly attained its limits. As a remedy, in \cite{defectiveparameterTinazAhu} and  \cite{defectiveRamseyJohnChappell}, computer-assisted efficient graph generation methods have been combined with direct proof techniques to obtain new defective Ramsey numbers (and related parameters). Besides efficient graph generation algorithms, a recent trend is to consider defective Ramsey numbers in graph classes. This has been initiated in \cite{1defectiveperfectTinazOylumJohn} where some 1-defective Ramsey numbers in perfect graphs have been computed. Further graph classes have been examined in \cite{defectiveramseynumbers} from the perspective of defective Ramsey numbers; namely, formulas for all or most cases have been established for defective Ramsey numbers in forests, cacti, bipartite graphs, split graphs and cographs, and the missing cases have been formulated as conjectures.

There is a close relationship between Ramsey numbers and the cocoloring problem where the vertices of a graph are partitioned into independent sets and cliques. A similar relationship exists between defective Ramsey numbers and the defective cocoloring problem where the vertices of a given graph are partitioned into $k$-defective sets. Given a graph $G$, a \textit{$k$-defective $m$-cocoloring} of $G$ consists in a partition of the vertex set of $G$ into $m$ subsets such that each one is a $k$-sparse or a $k$-dense set. The parameter $c_k(m)$ is defined as the maximum order $n$ such that every $n$-graph has a $k$-defective $m$-cocoloring. The classical version of this parameter (when $k=0$) has been studied by Straights in \cite{StraightsFormula} where $c_0(3)=8$ is shown. Further values such as $c_1(2)=7$, $c_1(3)\geq11$ and $c_2(2)\geq9$ are obtained by Ekim and Gimbel  \cite{ekim1}. This parameter has been formally defined by Akdemir and Ekim in \cite{defectiveparameterTinazAhu} where the values $c_0(4)=12$, $c_1(3)=12$ and $c_2(2)=10$ have been established via computer-assisted proofs using an efficient graph generation algorithm. 

\textbf{Our contribution:}
In this paper, we first provide a generic algorithm to compute defective Ramsey numbers in various graph classes. In Section \ref{sec:algo}, we explain our algorithm which is based on the idea of constructing all graphs (starting from order one) that do not contain $k$-dense $i$-sets nor $k$-sparse $j$-sets, and that belong to the desired graph class. It takes as input the defectiveness level $k$ as well as the orders $i$ and $j$ for the $k$-dense and $k$-sparse sets, respectively. When used in a recursive way along with a recognition algorithm for the target graph class, it results in the value of the defective Ramsey number in that class and all extremal graphs for it. All of our codes and the extremal graphs we obtain are available at \href{https://github.com/yunusdemirci/DefectiveRamsey}{https://github.com/yunusdemirci/DefectiveRamsey} \cite{github}.

By combining our efficient graph generation algorithm with classical proof techniques, we extend the results in \cite{defectiveramseynumbers} and \cite{1defectiveperfectTinazOylumJohn} by establishing new defective Ramsey numbers in perfect graphs and bipartite graphs, and investigate chordal graphs which have not been addressed from this perspective to the best of our knowledge. In what follows, let $\mathcal{PG}$, $\mathcal{BIP}$ and $\mathcal{CH}$ denote the classes of perfect, bipartite and chordal graphs, respectively.

In Section \ref{sec:perfect}, we follow up the results in \cite{1defectiveperfectTinazOylumJohn} by establishing new defective Ramsey numbers in perfect graphs.  First, we give all extremal graphs for $R_1^{\mathcal{PG}}(4,8)$ (whose value has been already shown in \cite{1defectiveperfectTinazOylumJohn} without providing all extremal graphs), and then establish that $R_1^{\mathcal{PG}}(4,9)=19$ and $R_1^{\mathcal{PG}}(4,10)=22$. We also compute several $k$-defective Ramsey numbers in perfect graphs for $k\in\{2,3,4\}$ as reported in Tables \ref{table:2-defectiveperfect}, \ref{table:3-defectiveperfect} and \ref{table:4-defectiveperfect} respectively. To conclude, we conjecture that there exists $l\geq 3$ such that $R_1^{\mathcal{PG}}(4,j+l)\leq R_1^{\mathcal{PG}}(4,j)+3l$ for all $j\geq 1$.

Section \ref{sec:bip} is devoted to bipartite graphs. In \cite{defectiveramseynumbers}, all but exactly five 1-defective Ramsey numbers in bipartite graphs have been established and the conjecture $R_1^{\mathcal{BIP}}(4,j)=2j-1$ has been suggested for the five remaining values, namely for $j\in \{ 10,11,12,18,19\}$. In this paper, we show that this conjecture does not hold for $j=10$ and 11 by establishing that $R_1^{\mathcal{BIP}}(4,10)=18$ and $R_1^{\mathcal{BIP}}(4,11)=20$. Besides, we consider $k$-defective Ramsey numbers in bipartite graphs for $k\in\{2,3,4\}$ (for the first time) and provide several values in Tables \ref{table:2-defectivebipartite}, \ref{table:3-defectivebipartite} and \ref{table:4-defectivebipartite}.

In Section \ref{sec:chordal}, we consider $k$-defective Ramsey numbers in chordal graphs and report our findings in Tables \ref{table:1-defectivechordal}, \ref{table:2-defectivechordal}, \ref{table:3-defectivechordal} and  \ref{table:4-defectivechordal} for $k=1,2,3$ and 4 respectively. We also show that we have $R_k^{\mathcal{CH}}(i,k+2)=i$ for all $k\geq 1$ and $i\geq k+2$; $R_k^{\mathcal{CH}}(k+2,j)=j$ for all $k\geq 1$ and $j\geq k+2$; and $R_1^{\mathcal{CH}}(4,j)=2j-2$ for all $j\geq 3$ building up on a result on cactus graphs obtained in \cite{defectiveramseynumbers}. 

It is important to note that the maximum defective Ramsey numbers we compute are of order 20 for both chordal and bipartite graphs, and of order 22 for perfect graphs. Indeed, it would not be possible to compute these values by considering all (unlabeled) graphs in a target graph class and checking for $k$-dense $i$-sets and $k$-sparse $j$-sets. The most efficient enumeration algorithms, by B. McKay \cite{McKay}, list all (unlabeled) connected chordal graphs up to 13 vertices, and all (unlabeled) perfect graphs up to 11 vertices. The efficiency of our algorithm is ensured by two main reasons. First, we do not generate all graphs in the target class and then check if each one admits the desired properties. Instead, we eliminate graphs which are not candidate for being extremal graphs at early stages of the generation. On top of that, we perform the recognition algorithms for graph classes efficiently by taking advantage of the fact that the new graphs are obtained by adding only one vertex to the previously obtained (sub-extremal) graphs which are already known to belong to the target graph class.

Besides computing new defective Ramsey numbers in perfect graphs, bipartite graphs and chordal graphs, we also initiate the study of the parameter $c_k(m)$ in various graph classes in Section \ref{sec:coco}. Formally, we define $c^{\mathcal G}_k(m)$ as the maximum order $n$ such that every $n$-graph in the graph class $\mathcal G$ has a $k$-defective $m$-cocoloring. We first establish a lower bound and an upper bound for the parameter $c^{\mathcal G}_k(m)$. Then, in Section \ref{sec:cocoperfect}, we focus on perfect graphs and show that $c_0^{\mathcal{PG}}(m)=m(m+3)/2$ for any $m$. Making use of efficient graph generation algorithms, we also establish two values, namely $c_1^{\mathcal{PG}}(2)=7$ (with exactly 24 extremal graphs) and $c_2^{\mathcal{PG}}(2)=11$, as well as the bounds $13 \leq c_1^{\mathcal{PG}}(3) \leq 14$ and $12 \leq c_3^{\mathcal{PG}}(2) \leq 14$. Lastly, in Section \ref{sec:cococograph}, we investigate the class of cographs, denoted by $\mathcal{CO}$, using solely direct proof techniques. The main result of this section is that $c_k^{\mathcal{CO}}(2)=3k+5$ for all $k\geq 1$.
	
\section{Computer Based Search for Extremal Graphs}\label{sec:algo}

In this section, we describe a generic algorithm to examine defective Ramsey numbers in a graph class. In subsequent sections, we will use this algorithm to derive some new values of defective Ramsey numbers in perfect graphs, bipartite graphs and chordal graphs, as well as to find related extremal graphs.  The codes of Algorithm \ref{algo:CHECK} for perfect graphs, bipartite graphs and chordal graphs and the lists of all extremal graphs obtained in this paper are available in our github account \cite{github}.

Let $\mathcal{T}_n^\mathcal{G}(k,i,j)$ be the set of all graphs of order $n$ belonging to the graph class $\mathcal{G}$ and containing no $k$-dense $i$-set or $k$-sparse $j$-set. For any integer $n$, we will call a $k$-dense $i$-set or a $k$-sparse $j$-set \textit{a forbidden $k$-defective set for} $\mathcal{T}_n^\mathcal{G}(k,i,j)$. Note that the set of all extremal graphs for ${R}_k^\mathcal{G}(i,j)$ corresponds to the set $\mathcal T_{n}^\mathcal{G}(k,i,j)$ for $n={R}_k^\mathcal{G}(i,j)-1$. Accordingly, a graph in  $\mathcal{T}_{n}^\mathcal{G}(k,i,j)$ for $n < {R}_k^\mathcal{G}(i,j)-1$ will be  called a \textit{sub-extremal} graph for ${R}_k^\mathcal{G}(i,j)$. 

Our algorithm is based on the observation that every induced subgraph of an (sub-) extremal graph is a sub-extremal graph. Consequently, it starts with a set $W$ of graphs having no forbidden $k$-defective set for  $\mathcal{T}_n^\mathcal{G}(k,i,j)$ and produces the set $K$ of all graphs in $\mathcal{T}_{n+1}^\mathcal{G}(k,i,j)$ containing a graph in $W$ as induced subgraph. Clearly, if we start with $n=1$ and $W=\{K_1\}$, and run the algorithm recursively by taking the output set $K$ as input for the next run until the returned set $K$ is empty, then the last value of $n$ is equal to ${R}_k^\mathcal{G}(i,j)$ and the last non-empty output set $K$ is equal to the set of all extremal graphs for ${R}_k^\mathcal{G}(i,j)$. Alternatively, we can achieve the same goal by starting with the set of all sub-extremal graphs $\mathcal{T}_n^\mathcal{G}(k,i,j)$ for some $n<{R}_k^\mathcal{G}(i,j)$.\\

	\begin{algorithm}[H]
		
		\KwInput{$W\subseteq\mathcal{T}_n^\mathcal{G}(k,i,j)$, parameters $k, i, j$ such that $i,j\geq k+2$}
		\KwOutput{All graphs in $\mathcal{T}_{n+1}^\mathcal{G}(k,i,j)$ which contain at least one graph from $W$ as an induced subgraph}
		
		Let $K=\emptyset$. \\ 
		\For{$G\in W$}{
			\For{\label{line:S}$S\subseteq V(G)$}{
				Take the graph $G_S$ that is formed by adding a new vertex $v$ into $G$ and all edges between $v$ and all vertices in $S$.\\ \label{line:allS}
				Let $u=$ \textbf{TRUE}.\\ \label{line:candidate}
				\For{$I\subseteq V(G_S)$ such that $v\in I$ and $|I|\in\{i,j\}$}  { 
					\If{$|I|=i$ and $G[I]$ is $k$-dense}{$u=$ \textbf{FALSE} and \textbf{BREAK}}
					\If{$|I|=j$ and $G[I]$ is $k$-sparse}{$u=$ \textbf{FALSE} and \textbf{BREAK}}
				} \label{line:kdef}
				
				 \If{u}{\label{line:class} \If{$G_S\in\mathcal{G}$}{\label{line:G}Add $G_S$ into $K$.}} \label{line:K}
				
		}}
		Return a maximal non-isomorphic set of graphs in $K$.
		\caption{Sub-extremal} \label{algo:CHECK}
	\end{algorithm}
	
	\vspace{0.3cm}
	
Technically, Algorithm \ref{algo:CHECK} takes as input a set $W$ of graphs on $n$ vertices which has no $k$-dense $i$-set and no $k$-sparse $j$-set. For each graph $G$ in the input set $W$, in lines \ref{line:S} and \ref{line:allS}, it constructs a new graph $G_S$ by adding a new vertex $v$ to $G$ in every possible way. Then, in lines \ref{line:candidate} to \ref{line:kdef}, it examines all subsets of the new graph $G_S$ containing $v$, and eliminates those containing a forbidden $k$-defective set. In lines \ref{line:class} to \ref{line:K}, only those graphs belonging to the graph class $\mathcal G$ among the remaining ones are added to the output set $K$. Finally, an isomorphism check taken from \cite{defectiveRamseyJohnChappell} is applied to return a maximal set of non-isomorphic graphs in $K$. At the end, Algorithm \ref{algo:CHECK} gives all graphs on $n+1$ vertices in the studied graph class which has no forbidden $k$-defective set and having at least one graph from the input as an induced subgraph.
	
In the following remark, we summarize how Algorithm \ref{algo:CHECK} can be used to produce all extremal graphs and thus the related defective Ramsey number for all parameters $k,i$ and $j$. 

	\begin{remark}\label{rem:algo}
If we input $W=\mathcal{T}_n^\mathcal{G}(k,i,j)$, then Algorithm \ref{algo:CHECK} produces $\mathcal{T}_{n+1}^\mathcal{G}(k,i,j)$. For all integers $k,i$ and $j$, if we take $W=\mathcal{T}_1^\mathcal{G}(k,i,j)=\{K_1\}$ and apply Algorithm \ref{algo:CHECK} recursively until the set $K=\mathcal{T}_n^\mathcal{G}(k,i,j)$ is empty, then $R_k^{\mathcal G}(i,j)=n$ and $\mathcal{T}_{n-1}^\mathcal{G}(k,i,j)$ is the set of all extremal graphs for $R_k^{\mathcal G}(i,j)=n$.
	\end{remark}

All the algorithms in this paper are implemented in Python (except the improvement for the
lower bounds on $c_1^{\mathcal{PG}}(3)$ which is implemented in Julia) and executed on an Intel Core i7 machine with a 2.50-GHz clock speed and 8GB of RAM memory.

For the computation of 1-defective Ramsey numbers, we combined direct proof techniques with the use of Algorithm \ref{algo:CHECK} and set the time limit to two days for perfect graphs, one day for bipartite graphs and 12 hours for chordal graphs. For higher defectiveness values $k=2,3$ and 4, instead of setting a time limit, we rather limited the value of the defective Ramsey number to 15 in our tables. In most of the cases, these tables can be easily extended by allowing the algorithm to run longer; further details will be provided in related sections.

\section{Defective Ramsey Numbers in Perfect Graphs} \label{sec:perfect}

In this section, we use Remark \ref{rem:algo} to compute new defective Ramsey numbers in perfect graphs and determine their extremal graphs. In line \ref{line:G} of Algorithm \ref{algo:CHECK}, we use a straightforward algorithm for perfect graph recognition: according to the Strong Perfect Graph Theorem \cite{SPGT}, a graph $G$ is perfect if and only if neither $G$ nor its complement contains an \textit{odd hole}, that is, an induced cycle of odd length at least 5. To this end, we take all subsets of vertices of odd order at least five and eliminate the graph under consideration if such a set induces a cycle or the complement of a cycle. Note that we need to check perfectness only for graphs induced by subsets of vertices containing the newly added vertex since all the input graphs of Algorithm \ref{algo:CHECK} are known to be perfect by definition of $\mathcal{T}_n^\mathcal{PG}(k,i,j)$, thus they do not contain odd holes or their complements.

Our first result is related to $R_1^{\mathcal{PG}}(4,8)$; its value has been shown to be 15 in \cite{1defectiveperfectTinazOylumJohn}, however only one extremal graph (the Heawood graph depicted in Figure \ref{fig:IMAGEextremalfor4-1-8-1}) has been reported. Here, we show that there are exactly three extremal graphs.
	
	\begin{theorem}\label{thm:4-8-extremal-three}
	There are three extremal graphs for $R_1^{\mathcal{PG}}(4,8)=15$, shown in Figures \ref{fig:IMAGEextremalfor4-1-8-1}, \ref{fig:IMAGEextremalfor4-1-8-2}, and \ref{fig:IMAGEextremalfor4-1-8-3}.
	\end{theorem}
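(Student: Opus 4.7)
The plan is to invoke Remark \ref{rem:algo} directly with parameters $k=1$, $i=4$, $j=8$ and $\mathcal{G}=\mathcal{PG}$. Since the value $R_1^{\mathcal{PG}}(4,8)=15$ has already been established in \cite{1defectiveperfectTinazOylumJohn}, the remaining task is just to produce the complete list $\mathcal{T}_{14}^{\mathcal{PG}}(1,4,8)$ of sub-extremal graphs on 14 vertices and then check that, up to isomorphism, this list contains exactly the three graphs depicted in Figures \ref{fig:IMAGEextremalfor4-1-8-1}, \ref{fig:IMAGEextremalfor4-1-8-2}, and \ref{fig:IMAGEextremalfor4-1-8-3}.

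Concretely, I would start with $W=\mathcal{T}_1^{\mathcal{PG}}(1,4,8)=\{K_1\}$ and apply Algorithm \ref{algo:CHECK} recursively thirteen times, feeding the output $K=\mathcal{T}_{n+1}^{\mathcal{PG}}(1,4,8)$ back as the input $W$ for the next iteration. Inside each iteration, for every $G\in W$ and every $S\subseteq V(G)$, the algorithm constructs $G_S$ by attaching a new vertex $v$ with neighborhood $S$, discards $G_S$ whenever $v$ lies in a 1-dense 4-set or a 1-sparse 8-set, checks perfectness via the Strong Perfect Graph Theorem (scanning odd-length induced subsets of size $\geq 5$ of $G_S$ and of its complement, restricted to subsets containing $v$ since the input is already perfect), and finally prunes duplicates using the isomorphism routine of \cite{defectiveRamseyJohnChappell}.

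Once the thirteenth iteration completes, the returned family $\mathcal{T}_{14}^{\mathcal{PG}}(1,4,8)$ is exactly the set of extremal graphs for $R_1^{\mathcal{PG}}(4,8)=15$ by Remark \ref{rem:algo}. To finish, I would verify that the three graphs in the cited figures are perfect, contain neither a 1-dense $4$-set nor a 1-sparse $8$-set, and are pairwise non-isomorphic; then I would confirm that each of them matches (under the same isomorphism check) one of the graphs in the computed list and that no fourth graph appears.

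The principal obstacle is computational rather than conceptual: the size of $\mathcal{T}_n^{\mathcal{PG}}(1,4,8)$ grows quickly in $n$, and both the forbidden-set scan and the odd-hole check for $G_S$ and $\overline{G_S}$ are expensive per graph. The paper's two-day time budget for 1-defective perfect-graph searches confirms that the late iterations (say $n=11,12,13$) dominate the runtime. A secondary worry is that any missed duplicate at an intermediate stage would multiply the work of all subsequent iterations, so the correctness of the non-isomorphic filter at every step is essential; this is handled by the built-in pruning at the final line of Algorithm \ref{algo:CHECK}.
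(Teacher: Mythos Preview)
Your proposal is correct and matches the paper's own proof essentially verbatim: the paper simply invokes Remark~\ref{rem:algo}, running Algorithm~\ref{algo:CHECK} with $k=1$, $i=4$, $j=8$ starting from $W=\{K_1\}$ to produce $\mathcal{T}_{14}^{\mathcal{PG}}(1,4,8)$. Your additional remarks about the perfectness check, isomorphism pruning, and computational cost are accurate elaborations of exactly what the paper does (and describes earlier in Section~\ref{sec:algo} and at the start of Section~\ref{sec:perfect}).
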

	\begin{figure}[h]
		\begin{minipage}{0.4\textwidth}

			\centering
			\includegraphics[scale=0.55]{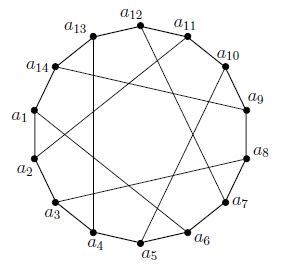}
			\caption{Extremal graph $G_1$ for $R_1^{\mathcal{PG}}(4,8)$.} \label{fig:IMAGEextremalfor4-1-8-1}

		\end{minipage}
		\hfill
		\begin{minipage}{0.5\textwidth}
			
			\centering
			\includegraphics[scale=0.65]{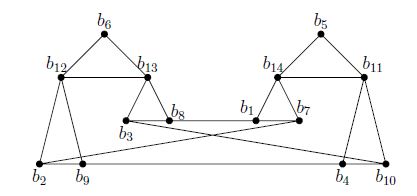}
			\caption{Extremal graph $G_2$ for $R_1^{\mathcal{PG}}(4,8)$.} \label{fig:IMAGEextremalfor4-1-8-2}
			
		\end{minipage}
		
		
		
	\end{figure}
	
	\begin{figure}[h]
		
		\centering
		\includegraphics[scale=0.6]{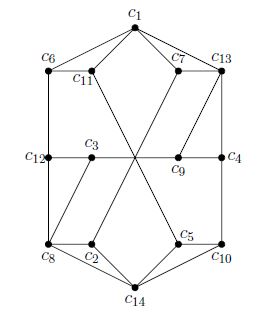}
		\caption{Extremal graph $G_3$ for $R_1^{\mathcal{PG}}(4,8)$.} \label{fig:IMAGEextremalfor4-1-8-3}
		
	\end{figure} 	
\begin{proof}
By Remark \ref{rem:algo}, we identified $\mathcal{T}_{14}^\mathcal{PG}(1,4,8)$, that is all extremal graphs for $R_1^{\mathcal{PG}}(4,8)$, by running Algorithm \ref{algo:CHECK} with parameters $k=1$, $i=4$, $j=8$ and starting with $W=\mathcal{T}_{1}^\mathcal{PG}(1,4,8)=\{K_1\}$. 
\end{proof}
	
	\noindent Our next result is obtained using a combination of direct proof techniques and computer assisted search. We show that $R_1^{\mathcal{PG}}(4,9)\leq 19$ by using theoretical approaches whereas the lower bound comes from the extremal graph found by Algorithm \ref{algo:CHECK}.  
	
	\begin{theorem}\label{thm:4-9}
		We have $R_1^{\mathcal{PG}}(4,9)=19$ with the unique extremal graph $G_4$ given in Figure \ref{fig:extremalfor4-1-9}.
	\end{theorem}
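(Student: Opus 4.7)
The plan is to split the theorem into three tasks: the upper bound $R_1^{\mathcal{PG}}(4,9) \leq 19$, the lower bound $R_1^{\mathcal{PG}}(4,9) \geq 19$, and the uniqueness of the extremal graph $G_4$. The last two will be obtained as direct consequences of Remark \ref{rem:algo} applied to Algorithm \ref{algo:CHECK}. The upper bound is the part that requires a genuine combinatorial argument, and I would attack it by a structural analysis built on top of Theorem \ref{thm:4-8-extremal-three}.

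Suppose, for contradiction, that $G$ is a perfect graph on $19$ vertices with no $1$-dense $4$-set and no $1$-sparse $9$-set. The first step is to bound the degrees. For any vertex $v$, if $T \subseteq N(v)$ induces a $1$-dense $3$-set, then $T \cup \{v\}$ is a $1$-dense $4$-set; hence $G[N(v)]$ contains neither $P_3$ nor $K_3$, i.e.\ it has maximum degree $1$, and so $N(v)$ is itself $1$-sparse. This forces $|N(v)| \leq 8$, since otherwise $N(v)$ would contain a $1$-sparse $9$-set. On the other hand, $G[V \setminus N[v]]$ is a perfect graph that contains no $1$-dense $4$-set and no $1$-sparse $8$-set (the latter because, joined with $v$, it would produce a $1$-sparse $9$-set), so Theorem \ref{thm:4-8-extremal-three} yields $|V \setminus N[v]| \leq 14$, whence $\deg(v) \geq 4$. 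Thus every vertex of $G$ has degree between $4$ and $8$.

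The heart of the proof is then to exploit the fact that the three extremal graphs for $R_1^{\mathcal{PG}}(4,8)$, as well as the smaller sub-extremal graphs in $\mathcal{T}_n^{\mathcal{PG}}(1,4,8)$ for $n \in \{10,\dots,13\}$, are completely known and listed by Algorithm \ref{algo:CHECK}: for every $v$ the induced subgraph $G[V \setminus N[v]]$ must be one of them. I would pick a vertex $v$ of maximum degree $d \in \{4,\dots,8\}$, note that $G[N(v)]$ is a matching plus isolated vertices on $d$ vertices, and then analyze how the vertices of $V \setminus N[v]$ may attach to $N(v)$ without creating a forbidden set; for example, a vertex $u \in V \setminus N[v]$ adjacent to both endpoints of an edge of $G[N(v)]$ would, together with $v$, create a $1$-dense $4$-set, while a $u$ with many non-neighbors in $N(v)$ can be combined with the $1$-sparse structure of $N(v)$ to produce a $1$-sparse $9$-set. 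I expect the main obstacle to be the low-degree case $d \in \{4,5\}$: then $|V\setminus N[v]|$ is close to $14$ and the degree-based constraints are weak, so the extremal shape of $G[V\setminus N[v]]$ must be combined with the $1$-sparse structure of every $N(w)$ simultaneously, which typically triggers a long case analysis. A clean resolution will likely require a double-counting argument over the edges between $N(v)$ and $V\setminus N[v]$, or, as a fallback, feeding the partially-determined configurations into Algorithm \ref{algo:CHECK} as the input set $W$ to certify that no perfect completion on $19$ vertices exists.

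For the lower bound $R_1^{\mathcal{PG}}(4,9) \geq 19$ together with the uniqueness of $G_4$, I would run Algorithm \ref{algo:CHECK} recursively from $W=\{K_1\}$ with parameters $k=1$, $i=4$, $j=9$ until the output stabilizes; by Remark \ref{rem:algo}, the last non-empty set produced is $\mathcal{T}_{18}^{\mathcal{PG}}(1,4,9)$, which is to be verified to consist of the single graph $G_4$ depicted in Figure \ref{fig:extremalfor4-1-9}.
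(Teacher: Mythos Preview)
Your degree-bound step (forcing $\delta(G)\ge 4$ via Theorem~\ref{thm:4-8-extremal-three}) is exactly how the paper opens, but from that point on you make the argument far harder than it needs to be. Once every vertex has degree at least $4$ and $G$ has no $1$-dense $4$-set, the paper dispatches the upper bound in a single paragraph: pick any vertex $v$ with four neighbours $v_1,\dots,v_4$; the absence of $1$-dense $4$-sets forces $N(v_i)\cap N(v_j)=\{v\}$ for $i\ne j$, each $N(v_i)\setminus N[v]$ is $1$-sparse of size $\ge 2$, and there are no edges between $N(v_i)\setminus N[v]$ and $N(v_j)\setminus N[v]$ (such an edge would close a $C_5$ or a $C_4$, both forbidden). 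Hence $\{v\}\cup\bigcup_{i=1}^4\big(N(v_i)\setminus N[v]\big)$ already contains a $1$-sparse $9$-set. Your proposed case split on the \emph{maximum} degree, the inspection of $\mathcal T_n^{\mathcal{PG}}(1,4,8)$ for $n\in\{10,\dots,13\}$, and the anticipated ``long case analysis'' or algorithmic fallback are all unnecessary.

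For the lower bound and uniqueness your plan (run Algorithm~\ref{algo:CHECK} from $K_1$) is correct, but the paper again takes a shortcut that is worth noting. Reusing the two cases above on an $18$-vertex extremal graph $H$ shows that $H$ must contain a vertex $v$ of degree exactly $3$ with $H-N[v]\in\mathcal T_{14}^{\mathcal{PG}}(1,4,8)=\{G_1,G_2,G_3\}$. So the paper seeds Algorithm~\ref{algo:CHECK} with $W=\{G_1,G_2,G_3\}$ and parameters $(k,i,j)=(1,4,9)$, growing only four steps to $18$ vertices instead of eighteen steps from $K_1$; this yields the unique $G_4$ with far less computation. Your version works, but the structural information you already derived for the upper bound can be recycled here to prune the search dramatically.
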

	
	\begin{proof}
		Firstly, take a perfect graph $G$ on $19$ vertices. We will show that it has either a $1$-dense $4$-set or a $1$-sparse $9$-set. If it has a $1$-dense $4$-set, we are done. So assume $G$ has no $1$-dense $4$-set. This implies in particular that $N(v)$ is 1-sparse for every vertex $v\in V(G)$. We will prove that $G$ has a $1$-sparse $9$-set by examining two cases:
		\begin{itemize}
			\item[(i)] If there exists a vertex $v$ of degree at most three, then $G-N[v]$ is a perfect graph on at least 15 vertices. By Theorem \ref{thm:4-8-extremal-three}, $G-N[v]$ has a $1$-sparse $8$-set, say $S$. Then, $S\cup\{v\}$ is a $1$-sparse $9$-set in $G$, we are done.
			\item[(ii)] If all vertices have degree at least four, take an arbitrary vertex $v$, and let $\{v_1,v_2,v_3,v_4\}$ be four neighbors of $v$. Since $G$ has no $1$-dense $4$-set, we have $N(v_i)\cap N(v_j)=\{v\}$ for all $i\neq j$ and  $N(v_i)-N[v]$ is a $1$-sparse set with at least two vertices (since $d(v_i)\geq 4$) for all $i=1,2,3,4$. Moreover, there are no edges between $N(v_i)-N[v]$ and $N(v_j)-N[v]$ since $G$ has no induced cycle of length five (by perfectness) or four (in case $v_iv_j\in E$, by the absence of 1-dense 4-sets). As a result, $\{v\}\cup\big(N(v_1)-N[v]\big)\cup\big(N(v_2)-N[v]\big)\cup\big(N(v_3)-N[v]\big)\cup\big(N(v_4)-N[v]\big) $ contains a $1$-sparse $9$-set, we are done.
			\end{itemize}
		
Secondly, we show that there is only one extremal graph for $R_1^{\mathcal{PG}}(4,9)$ with 18 vertices. Let $H$ be a perfect graph on 18 vertices which has no $1$-dense $4$-set and no $1$-sparse $9$-set. Note that if all vertices of $H$ have degree at least four, then $H$ has a 1-dense 4-set or a 1-sparse 9-set using the same arguments as in case ii), a contradiction. If $H$ has a vertex $v$ of degree at most 2, then we obtain a 1-dense 4-set or a 1-sparse 9-set using the same arguments as in case i). It follows that $H$ has a vertex $v$ of degree three such that $G-N[v]\in \mathcal{T}_{14}^\mathcal{PG}(1,4,8)$. By Theorem \ref{thm:4-8-extremal-three}, we have $ \mathcal{T}_{14}^\mathcal{PG}(1,4,8)=\{G_1, G_2, G_3\}$. So, every extremal graph for $R_1^{\mathcal{PG}}(4,9)$ with 18 vertices should contain one graph in $\{G_1, G_2, G_3\}$. Therefore, $H$ can be  obtained by running Algorithm \ref{algo:CHECK} recursively by setting the parameters $k=1$, $i=4$, $j=9$ and starting with $W=\{G_1, G_2, G_3\}=\mathcal{T}_{14}^\mathcal{PG}(1,4,8)\subseteq \mathcal{T}_{14}^\mathcal{PG}(1,4,9)$ until output graphs have 18 vertices. It turns out that Algorithm \ref{algo:CHECK} returns only one extremal graph $G_4$ with 18 vertices which is depicted in Figure \ref{fig:extremalfor4-1-9}. In $G_4$, we note vertex 18 has degree three and $G- N[18]$ is isomorphic to $G_3$.
\begin{figure}[htbp]
		\centering
		\includegraphics[width=6cm,angle=0,height=6cm]{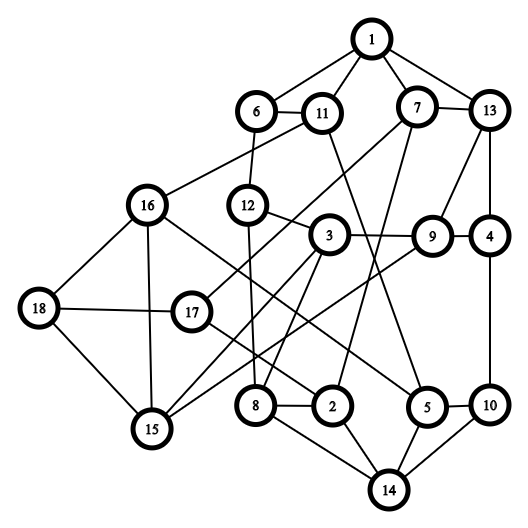}
		\caption{Unique extremal graph $G_4$ for $R_1^{\mathcal{PG}}(4,9)$.} \label{fig:extremalfor4-1-9}
	\end{figure}	
\end{proof}
		
We now compute $R_1^{\mathcal{PG}}(4,10)$. Again, we construct an extremal graph by using our computer based approach, and then we combine both theoretical results and Algorithm \ref{algo:CHECK} to find the desired value. This time, we do not provide the full list of extremal graphs, but exhibit only one.
	
	\begin{theorem}\label{thm:4-10}
		We have $R_1^{\mathcal{PG}}(4,10)=22$.
	\end{theorem}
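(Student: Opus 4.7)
The plan is to mirror the proof of Theorem~\ref{thm:4-9}, splitting on the minimum degree of $G$ and dispatching the residual case via a seeded run of Algorithm~\ref{algo:CHECK}.

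For the upper bound $R_1^{\mathcal{PG}}(4,10)\leq 22$, let $G$ be a perfect graph on $22$ vertices with no $1$-dense $4$-set; as in Theorem~\ref{thm:4-9} this forces $N(v)$ to be $1$-sparse for every $v$. If some $v$ has degree at most $2$, then $G-N[v]$ is a perfect graph on at least $19$ vertices, so by Theorem~\ref{thm:4-9} it contains a $1$-sparse $9$-set $S$, and $S\cup\{v\}$ is a $1$-sparse $10$-set. If some $v$ has degree exactly $3$, then $G-N[v]$ is a perfect graph on $18$ vertices with no $1$-dense $4$-set, so by the uniqueness part of Theorem~\ref{thm:4-9} it either contains a $1$-sparse $9$-set (again extending via $v$) or is isomorphic to $G_4$. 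So we may assume either that every vertex of $G$ has degree at least $4$, or that every degree-$3$ vertex $v$ of $G$ satisfies $G-N[v]\cong G_4$.

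Assume first that every vertex has degree at least $4$ and reuse the construction
\[
B(v)=\{v\}\cup\bigcup_{v_i\in N(v)}\bigl(N(v_i)-N[v]\bigr)
\]
from case~(ii) of Theorem~\ref{thm:4-9}. It is $1$-sparse by the same induced $C_4/C_5$ argument (a diagonal edge would give an induced $C_5$, forbidden by perfectness, or an induced $C_4$, which is a $1$-dense $4$-set), and a direct double count gives $|B(v)|=1+e_v$, where $e_v$ denotes the number of edges of $G$ with one endpoint in $N(v)$ and the other outside $N[v]$. If some vertex has degree at least $5$ then the bound $e_v\geq d(v)(\delta(G)-1)-2|E(G[N(v)])|$ immediately yields $|B(v)|\geq 11$. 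Otherwise $G$ is $4$-regular and $|B(v)|=13-2|E(G[N(v)])|$; since $\sum_v|E(G[N(v)])|$ equals three times the number of triangles of $G$, it is a multiple of $3$, whereas $22\cdot 2=44$ is not, so some $v$ must satisfy $|E(G[N(v)])|\leq 1$ and hence $|B(v)|\geq 11$, producing the sought $1$-sparse $10$-set.

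The remaining case---that every degree-$3$ vertex $v$ of $G$ satisfies $G-N[v]\cong G_4$---is what I expect to be the main obstacle, since one cannot obviously rule it out by the neighborhood-counting construction alone. The natural way to close it is to invoke Algorithm~\ref{algo:CHECK} with parameters $k=1,\,i=4,\,j=10$, seeded by $W=\{G_4\}\subseteq\mathcal{T}_{18}^{\mathcal{PG}}(1,4,10)$ (the inclusion holds since $G_4$ has neither a $1$-dense $4$-set nor a $1$-sparse $9$-set, hence a fortiori no $1$-sparse $10$-set), and to check that iterating up to order $22$ returns an empty set; this rules out the residual configuration and completes $R_1^{\mathcal{PG}}(4,10)\leq 22$. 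For the matching lower bound, Algorithm~\ref{algo:CHECK} produces at least one graph $G_5$ on $21$ vertices in $\mathcal{T}_{21}^{\mathcal{PG}}(1,4,10)$; exhibiting any such $G_5$ and verifying that it is perfect with no $1$-dense $4$-set and no $1$-sparse $10$-set yields $R_1^{\mathcal{PG}}(4,10)\geq 22$.
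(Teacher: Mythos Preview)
Your proposal is correct and follows the same overall architecture as the paper: establish the lower bound and dispatch the ``$G$ contains $G_4$'' residual case by seeding Algorithm~\ref{algo:CHECK} with $W=\{G_4\}$, and handle the remaining ``$\delta(G)\ge 4$'' case by the second-neighbourhood construction. The organization differs only cosmetically---the paper runs the seeded algorithm first and then observes that \emph{every} $18$-vertex induced subgraph has a $1$-sparse $9$-set, which lets it treat all vertices of degree at most $3$ uniformly, whereas you split $\delta\le 2$ from $\delta=3$ and invoke the algorithm only for the latter.

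The one substantive difference is your treatment of the $4$-regular sub-case. The paper argues structurally: it fixes $v$, shows $|E(G[N(v)])|=2$ for every $v$, names the eight second neighbours $a_i,b_i$, builds the sets $U,V$, and derives a contradiction by locating a leftover vertex $p$ whose four neighbours force a $1$-dense $4$-set or a $C_5$. Your argument is much shorter: since $N(v)$ is $1$-sparse on four vertices, $|E(G[N(v)])|\le 2$; if equality held everywhere then $\sum_v|E(G[N(v)])|=44$, contradicting $\sum_v|E(G[N(v)])|=3T$ with $T$ the triangle count, so some $v$ has $|E(G[N(v)])|\le 1$ and $|B(v)|\ge 11$. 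This is a genuine simplification---it replaces a page of case analysis with a divisibility observation---though it relies on the global constraint $|V(G)|=22\not\equiv 0\pmod 3$, so it would not transfer verbatim to other orders. One small presentational point: in the $\Delta\ge 5$ branch you should state explicitly that $|E(G[N(v)])|\le\lfloor d(v)/2\rfloor$ (again from $1$-sparseness of $N(v)$) to justify ``immediately yields $|B(v)|\ge 11$''.
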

	
	\begin{proof}
		Firstly, we run Algorithm \ref{algo:CHECK} by setting the parameters $k=1$, $i=4$, $j=10$ and starting with $W=\mathcal{T}_{18}^\mathcal{PG}(1,4,9)=\{G_4\}\subseteq \mathcal{T}_{18}^\mathcal{PG}(1,4,10)$ recursively until the output is empty. We observe that the output is empty for $n=22$, meaning that there is no graph in $\mathcal{T}_{22}^\mathcal{PG}(1,4,10)$ which contains $G_4$ as an induced subgraph. On the other hand, we obtain a unique graph in $\mathcal{T}_{21}^\mathcal{PG}(1,4,10)$ that contains $G_4$ as an induced subgraph; the graph $G_5$ given in Figure \ref{fig:extremalfor1-4-10} has 21 vertices, $G_5-\{19,20,21\}$ induce $G_4$, and it has no $1$-dense $4$-set or $1$-sparse $10$-set. It follows that $R_1^{\mathcal{PG}}(4,10)\geq22$. We note that since we do not start with the complete set $\mathcal{T}_{18}^\mathcal{PG}(1,4,10)$ but instead with only a subset of it, our method do not guarantee the full list of extremal graphs for $R_1^{\mathcal{PG}}(4,10)$. 

\begin{figure}[htbp]
	\centering
	\includegraphics[width=6cm,angle=0,height=6cm]{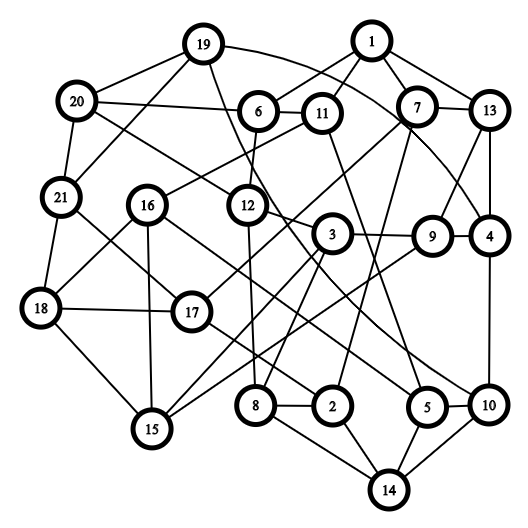}
	\caption{An extremal graph $G_5$ for $R_1^{\mathcal{PG}}(4,10)$.} \label{fig:extremalfor1-4-10}
\end{figure}

Now, let $G$ be a perfect graph on $22$ vertices. We will prove that $G$ has either a $1$-dense $4$-set or a $1$-sparse $10$-set. If $G$ has a 1-dense 4-set, we are done. So assume that $G$ has no $1$-dense $4$-set and we will prove that $G$ has a $1$-sparse $10$-set. As already stated, we know $G$ has no induced subgraph on $18$ vertices that is isomorphic to $G_4$, the unique graph in $\mathcal{T}_{18}^\mathcal{PG}(1,4,9)$. Thus, every induced subgraph of $G$ on $18$ vertices has a $1$-sparse $9$-set. Now, take a vertex $v\in V(G)$. If $d(v)\leq 3$, then $G-N[v]$ has at least $18$ vertices, thus contains a $1$-sparse $9$-set, say $J$. Then $J\cup\{v\}$ is a $1$-sparse $10$-set and we are done. Therefore, we assume that all degrees in $G$ are at least four. 
		\begin{itemize}
			\item[(i)] If there exists a vertex of degree at least five, say $v$, let $v_1$,$v_2$,$v_3$,$v_4$,$v_5$ be five neighbors of $v$. Since $G$ has no $1$-dense $4$-set, $\{v_1,v_2,v_3,v_4,v_5\}$ is $1$-sparse and any two of these five vertices have a unique common neighbor which is $v$. Moreover, for any $i\neq j$, we have that no neigbor of $v_i$ is adjacent to a neighbor of $v_j$ because $G$ has no induced cycle of lenght 5 (or 4). Since $d(v_i)\geq 4$ for all $i\in\{1,2,3,4,5\}$, the set $\bigcup_{i=1}^{5} \big(N(v_i)\backslash\{v\}\big)-\{v_1,v_2,v_3,v_4,v_5\}$ contains a $1$-sparse $10$-set, we are done. 
			\item[(ii)] If all vertices have degree four, take a vertex $v$ and its four neighbors $v_1$,$v_2$,$v_3$,$v_4$. Since ${S}=\{v,v_1,v_2,v_3,v_4\}$ has no $1$-dense $4$-set and $d(v_i)=4$ for all $i\in\{1,2,3,4\}$, $N(v_i)-{S}$ has at least two vertices for all $i$. If $|N(v_i)-{S}|\geq 3$ for some $i$, then $\bigcup_{i=1}^{4} N(v_i)-N(v)$ has a $1$-sparse $10$-set since $G$ has no $1$-dense $4$-set and no cycle of lenght $5$, we are done. So, assume $|N(v_i)-{S}|=2$ for all $i$, thus there are exactly two edges with both endpoints in $N(v)$, say without loss of generality $v_1v_2, v_3v_4\in E(G)$. Note that $v$ is selected arbitrarily, therefore we can suppose that $N(u)$ contains exactly two edges for all $u\in G$. Let $N(v_i)-\mathcal{S}=\{a_i,b_i\}$ for all $i$. Observe that $a_ib_i\in E(G)$ for all $i$ by our assumption. Moreover, since $G$ has no $1$-dense $4$-set and no cycle of size $5$, there are no edges between two sets $\{a_i,b_i\}$ and $\{a_j,b_j\}$ for all $i\neq j$. Consider the sets $U=\big(N(a_1)\cup N(b_1)\big)-\{v_1,a_1,b_1\}$ and $V=\big(N(a_2)\cup N(b_2)\big)-\{v_2,a_2,b_2\}$. Observe that $|U|=|V|=4$ and they are disjoint. Since $G$ has exactly 22 vertices, there exists a unique vertex in $V(G)-({S}\cup\{a_1,b_1,a_2,b_2,a_3,b_3,a_4,b_4\}\cup U\cup V)$, say $p$. Note that $p$ has no neighbors in ${S}\cup\{a_1,b_1,a_2,b_2\}$ because $G$ has no $1$-dense $4$-set and no cycle of size $5$. Since $d(p)=4$, by the pigeonhole principle, $p$ has at least two neighbors from one of the sets $\{a_3,b_3,a_4,b_4\}$, $U$ and $V$, which yields a $1$-dense $4$-set or a 5-cycle, a contradiction.    
		\end{itemize}
\end{proof}

We computed additional defective Ramsey numbers in perfect graphs by using Algorithm \ref{algo:CHECK}. Our results for defectiveness levels $k=1,2,3$ and 4 are reported in Tables \ref{table:1-defectiveperfect}, \ref{table:2-defectiveperfect}, \ref{table:3-defectiveperfect} and \ref{table:4-defectiveperfect}, respectively, with number of extremal graphs in parentheses (whenever we could obtain their full list).  

The values found in this study are highlighted in bold font. Values which are already known in Table \ref{table:1-defectiveperfect} are obtained in \cite{1defectiveperfectTinazOylumJohn}. For the remaining tables, the only values which are already known follow from Lemma 2.1 in \cite{defectiveramseynumbers} which states that $R_k^{\mathcal{PG}}(k+2,j) =j$ for all $j \geq k + 2$ (also $R_k^{\mathcal{PG}}(i,k+2)=i$ for all $i\geq k+2$, by self-complementarity of perfect graphs) where $k\in\{2,3,4\}$. While we execute our algorithm for two days when we deal with the values in the $1$-defective case, we could obtain all the values up to 15 in less than $4$ hours for $k$-defective cases for $k\geq 2$. In all computations, isomorphism checks are the most time consuming part as compared to checking perfectness and forbidden $k$-defective sets.\\

\begin{table}[H]\centering
			\scalebox{1}{
				\begin{tabular}{|C|C|C|C|C|C|C|C|C|}
					\hline
					\textbf{$R_1^{\mathcal{PG}}(i,j)$} & \textbf{3} & \textbf{4} & \textbf{5} & \textbf{6} & \textbf{7} & \textbf{8} & \textbf{9} & \textbf{10} \\
					\hline
					
					\textbf{3} & 3\textbf{(2)} & 4\textbf{(2)} & 5\textbf{(3)} & 6\textbf{(3)} & 7\textbf{(4)} & 8\textbf{(4)} & 9\textbf{(5)} & 10\textbf{(5)} \\
					\hline    
					
					\textbf{4} & 4\textbf{(2)} & 6(1) & 8(2) & 10(4) & 13(3) & 15\textbf{(3)} & \textbf{19(1)} & \textbf{22}\\
					\hline
					
					\textbf{5} & 5\textbf{(3)} & 8(2) & 13\textbf{(2)} &  &  & &&\\
					\hline
					
					
					
					
					
				\end{tabular}
			}
			
			\caption{1-Defective Ramsey Numbers in Perfect Graphs}
			\label{table:1-defectiveperfect}
		\end{table}
		\begin{table}[H]
			\centering
			\scalebox{1}{
				\begin{tabular}{|C|C|C|C|C|C|C|C|}
					\hline
					\textbf{$R_2^{\mathcal{PG}}(i,j)$} & \textbf{4} & \textbf{5} & \textbf{6} & \textbf{7} & \textbf{8} & \textbf{9} & \textbf{10}\\
					\hline
					
					\textbf{4} & 4\textbf{(4)} & 5\textbf{(4)} & 6\textbf{(4)} & 7\textbf{(4)} & 8\textbf{(4)} & 9\textbf{(4)} &10\textbf{(4)} \\
					\hline
					
					\textbf{5} & 5\textbf{(4)} & \textbf{7(2)} & \textbf{8(13)} & \textbf{10(16)} & \textbf{12(6)} & \textbf{15(2)}&\\
					\hline
					
					\textbf{6} & 6\textbf{(4)} & \textbf{8(13)} & \textbf{10(2)} & \textbf{13(7)} & & &\\
					\hline
					
					
					
					
					
				\end{tabular}
			}
			
			\caption{2-Defective Ramsey Numbers in Perfect Graphs}
			\label{table:2-defectiveperfect}
			
		\end{table}
		\begin{table}[H]
			
			\centering
			\scalebox{1}{
				\begin{tabular}{|C|C|C|C|C|C|C|C|}
					\hline
					\textbf{$R_3^{\mathcal{PG}}(i,j)$} & \textbf{5} & \textbf{6} & \textbf{7} & \textbf{8} & \textbf{9} & \textbf{10}& \textbf{11} \\
					\hline
					
					\textbf{5} & 5\textbf{(11)} & 6\textbf{(11)} & 7\textbf{(12)} & 8\textbf{(12)} & 9\textbf{(13)} & 10\textbf{(13)}& 11\textbf{(14)}\\
					\hline
					
					\textbf{6} & 6\textbf{(11)} & \textbf{8(4)} & \textbf{9(28)} & \textbf{10(159)} & \textbf{12(3)} & \textbf{13(67)} & \\
					\hline
					
					\textbf{7} & 7\textbf{(12)} & \textbf{9(28)} & \textbf{11(4)} & & & &\\
					\hline
					
					
					
					
					
				\end{tabular}
			}
			
			\caption{3-Defective Ramsey Numbers in Perfect Graphs}
			\label{table:3-defectiveperfect}
		\end{table}
		\begin{table}[H]
			\centering
			\scalebox{1}{
				\begin{tabular}{|C|C|C|C|C|C|C|C|}
					\hline
					\textbf{$R_4^{\mathcal{PG}}(i,j)$} & \textbf{6} & \textbf{7} & \textbf{8} & \textbf{9} & \textbf{10}& \textbf{11} & \textbf{12} \\
					\hline
					
					\textbf{6} & 6\textbf{(33)} & 7\textbf{(33)} & 8\textbf{(33)} & 9\textbf{(33)} & 10\textbf{(33)} & 11\textbf{(33)} & 12\textbf{(33)} \\
					\hline
					
					\textbf{7} & 7\textbf{(33)} & \textbf{9(11)} & \textbf{10(84)} & \textbf{11(549)} & \textbf{13(4)}& \textbf{14(28)} &\\
					\hline
					
					\textbf{8} & 8\textbf{(33)} & \textbf{10(84)} & \textbf{12(8)} & & & &\\
					\hline
					
					
					
					
					
				\end{tabular}
			}
			
			\caption{4-Defective Ramsey Numbers in Perfect Graphs}
			\label{table:4-defectiveperfect}
		\end{table}

We conclude this section with a conjecture on the growth of $R_1^{\mathcal{PG}}(4,j+3)$ for $j\geq 1$. A formula for the classical Ramsey numbers in perfect graphs has been provided in \cite{ramseygraphclassesHeggernes}. Namely, $R^{\mathcal{PG}}(i, j) = (i-1)(j-1) + 1$ for all $i, j \geq 1$. It follows that $R_0^{\mathcal{PG}}(4,j)=3j-2$. Since $R_1^{\mathcal{PG}}(4,j)\leq R_0^{\mathcal{PG}}(4,j)$, the value of $R_1^{\mathcal{PG}}(4,j)$ does not increase by more than three for consecutive vales of $j$ in the long run. However, the inequality $R_1^{\mathcal{PG}}(4,j+1)\leq R_1^{\mathcal{PG}}(4,j) +3$ does not hold in the light of our results $R_1^{\mathcal{PG}}(4,8)=15$ and $R_1^{\mathcal{PG}}(4,9)=19$. Likewise, $R_1^{\mathcal{PG}}(4,j+2)\leq R_1^{\mathcal{PG}}(4,j) +6$ does not hold neither since $R_1^{\mathcal{PG}}(4,8)=15$ and $R_1^{\mathcal{PG}}(4,10)=22$. Instead, we expect that the difference can be at most $3l$ in each $l$ steps for some fixed $l\geq 3$, which holds for currently known values.  Accordingly, we formulate the following conjecture.
	
	\begin{conjecture}
		There exists $l\geq 3$ such that $R_1^{\mathcal{PG}}(4,j+l)\leq R_1^{\mathcal{PG}}(4,j)+3l$ for all $j\geq 1$.
	\end{conjecture}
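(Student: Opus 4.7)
The plan is to attempt the conjecture with $l=3$, which is the smallest value consistent with the data in Table~\ref{table:1-defectiveperfect}: both $R_1^{\mathcal{PG}}(4,9)-R_1^{\mathcal{PG}}(4,6)=9$ and $R_1^{\mathcal{PG}}(4,10)-R_1^{\mathcal{PG}}(4,7)=9$ are already tight. I would proceed by induction on $j$, taking as base cases the values already computed in Table~\ref{table:1-defectiveperfect}. For the inductive step, fix a perfect graph $G$ on $n=R_1^{\mathcal{PG}}(4,j)+9$ vertices and aim to produce either a 1-dense 4-set or a 1-sparse $(j+3)$-set. If $G$ has a 1-dense 4-set we are done, so assume it does not; in particular each $N(v)$ is 1-sparse.

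The central idea, modelled on the low-degree arguments of Theorem~\ref{thm:4-9} and Theorem~\ref{thm:4-10}, is to find three vertices $v_1,v_2,v_3$ forming a 1-sparse triple with $|N[v_1]\cup N[v_2]\cup N[v_3]|\leq 9$. Given such a triple, $H=G-(N[v_1]\cup N[v_2]\cup N[v_3])$ is a perfect graph on at least $R_1^{\mathcal{PG}}(4,j)$ vertices with no 1-dense 4-set, hence contains a 1-sparse $j$-set $S$; since no vertex of $S$ is adjacent to any $v_i$ and $\{v_1,v_2,v_3\}$ is already 1-sparse, the union $S\cup\{v_1,v_2,v_3\}$ is the desired 1-sparse $(j+3)$-set. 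A convenient sufficient condition is that the three vertices are pairwise non-adjacent and each has degree at most $2$.

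The proof would then split into three cases. In Case~(A), $G$ contains three pairwise non-adjacent vertices of degree at most $2$ and we are done as above. In Case~(B), $G$ has a vertex $v$ of degree at least some threshold (say $\geq 5$); then the neighborhood-expansion trick used in Theorem~\ref{thm:4-9}(ii) and Theorem~\ref{thm:4-10}(i) applies, since the sets $N(v_i)\setminus N[v]$ are pairwise non-adjacent (perfectness rules out induced $C_5$'s and the 4-set condition rules out induced $C_4$'s) and each is 1-sparse, producing an explicit sparse set whose size grows linearly with $d(v)$. In Case~(C), a residual regime where the minimum and maximum degree both sit near $3$ and (A) fails, one would attempt a refined local count in the spirit of Theorem~\ref{thm:4-10}(ii), combining small-diameter considerations with the absence of odd holes to either extract the required sparse set or reach a contradiction.

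The main obstacle I foresee is Case~(C): with degrees tightly pinned, the bookkeeping of second neighborhoods becomes delicate, and the proof of Theorem~\ref{thm:4-10} already illustrates how intricate such analysis can be even for a single target size. Extending such an argument uniformly in $j$ likely requires an auxiliary structural lemma, perhaps classifying the perfect graphs of minimum degree $3$ that avoid both 1-dense 4-sets and large 1-sparse sets. Should $l=3$ prove too tight because of borderline configurations, one can retreat to a larger $l$, which enlarges the budget $|N[v_1]\cup\cdots\cup N[v_l]|\leq 3l$, weakens Case~(A), and gives the induction more slack, at the cost of a weaker final bound but still matching the asymptotic slope of $3$ coming from the classical bound $R_0^{\mathcal{PG}}(4,j)=3j-2$.
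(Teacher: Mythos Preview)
The statement you are addressing is presented in the paper as an \emph{open conjecture}; there is no proof in the paper to compare against. The authors motivate it by the asymptotic slope $3$ coming from $R_0^{\mathcal{PG}}(4,j)=3j-2$ and by the computed values in Table~\ref{table:1-defectiveperfect}, but they explicitly leave it unresolved.

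Your outline is a natural first attack, and Case~(A) is correct and does scale with $j$: three pairwise non-adjacent vertices of degree at most $2$ let you peel off a closed neighbourhood of size at most $9$ and recurse. However, there are two genuine gaps beyond the one you flag. First, Case~(B) as you state it does \emph{not} scale with $j$. The neighbourhood-expansion arguments in Theorems~\ref{thm:4-9}(ii) and~\ref{thm:4-10}(i) produce a $1$-sparse set whose size is governed by the local degrees (roughly $2d(v)$ when all degrees are at least~$4$), not by $j$; a fixed threshold such as $d(v)\geq 5$ therefore cannot deliver a $1$-sparse $(j+3)$-set for large $j$. To make Case~(B) useful you would either need the threshold to grow with $j$ (for which there is no a priori reason) or to combine expansion with a further recursion, and it is unclear how to do the latter while respecting the $3l$ budget. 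Second, in Case~(C) the paper's own proofs for $j=9$ and $j=10$ already required computer enumeration precisely in the regime where degrees are pinned near $3$--$4$; this is strong evidence that no uniform hand argument of the type you sketch is currently known there.

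Finally, retreating to a larger $l$ does not obviously help: the bottleneck is the intermediate-degree regime (minimum degree $3$ or $4$, maximum degree bounded), and in that regime you can find neither $l$ vertices of degree $\leq 2$ for the budget trick nor a vertex of degree large enough for expansion to reach size $j+l$, regardless of how large $l$ is. Any successful approach will likely need a new structural ingredient for perfect graphs that are $1$-dense-$4$-set-free with minimum degree at least $3$.
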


	\section{Defective Ramsey Numbers in Bipartite Graphs} \label{sec:bip}
	\noindent 
In \cite{defectiveramseynumbers}, it has been shown that $R_1^{\mathcal{BIP}}(4,j)=2j-1$ for all $j\geq 8$ except for $j\in \{ 10,11,12,18,19\}$. It has been also conjectured that for $j\in \{ 10,11,12,18,19\}$, we also have $R_1^{\mathcal{BIP}}(4,j)=2j-1$. In this section, we first establish two of these missing numbers, namely for $j=10$ and $j=11$  by combining classical proof techniques and Algorithm \ref{algo:CHECK}. It turns out that the conjecture reflecting the general trend does not hold for $j=10$ and $j=11$. Subsequently, we compute some new values of $k$-defective Ramsey numbers in bipartite graphs for $k\in \{2,3,4\}$, also by using Algorithm \ref{algo:CHECK}.

	\begin{theorem}
		$R_1^{\mathcal{BIP}}(4,10)=18$.
	\end{theorem}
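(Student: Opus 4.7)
The plan is to establish $R_1^{\mathcal{BIP}}(4,10)=18$ by proving the matching lower and upper bounds, along the same lines as Theorems \ref{thm:4-9} and \ref{thm:4-10}. A useful preliminary observation is that in a bipartite graph a 1-dense 4-set must be realized as an induced $K_{2,2}$: if a 4-set $D$ has $a$ vertices on one side of the bipartition and $4-a$ on the other, then $a\in\{0,4\}$ gives an independent set, $a\in\{1,3\}$ forces three vertices to miss at least two others in $D$, and only $a=2$ together with all four possible cross-edges present produces a 1-dense configuration. Equivalently, the no-1-dense-4-set condition reduces to $G$ being $C_4$-free, which forces that any two vertices on the same side share at most one common neighbor on the other side.

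For the lower bound $R_1^{\mathcal{BIP}}(4,10)\geq 18$, I would invoke Algorithm \ref{algo:CHECK} to exhibit at least one extremal bipartite graph on 17 vertices. Since $R_1^{\mathcal{BIP}}(4,9)=17$ is already known from \cite{defectiveramseynumbers} and since $\mathcal{T}_{n}^{\mathcal{BIP}}(1,4,9)\subseteq\mathcal{T}_{n}^{\mathcal{BIP}}(1,4,10)$ for every $n$, I would seed the algorithm with the fully enumerated set of extremal 16-vertex graphs of $R_1^{\mathcal{BIP}}(4,9)$ and run one additional iteration with parameters $k=1,i=4,j=10$. Producing any member of $\mathcal{T}_{17}^{\mathcal{BIP}}(1,4,10)$ certifies the lower bound.

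For the upper bound $R_1^{\mathcal{BIP}}(4,10)\leq 18$, let $G$ be a bipartite graph on 18 vertices with no induced $K_{2,2}$. Applied to $G-v$ for any vertex $v$, the equality $R_1^{\mathcal{BIP}}(4,9)=17$ yields a 1-sparse 9-set $S\subseteq V(G)\setminus\{v\}$. The set $S\cup\{v\}$ is 1-sparse whenever $v$ has at most one neighbor in $S$ and any such neighbor is isolated in $G[S]$. I would then do a case analysis on the degree of a carefully chosen $v$: vertices of degree at most one extend $S$ directly, possibly after swapping out the unique neighbor of $v$ from $S$ (which yields a 1-sparse 8-set to which one further vertex must be added from $V(G)\setminus(S\cup N[v])$, a set whose existence comes from a second application of $R_1^{\mathcal{BIP}}(4,9)$). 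For higher-degree $v$, the $C_4$-free property restricts how many matched edges of $G[S]$ can meet the neighborhood of $v$, so a pigeonhole argument either produces a $K_{2,2}$, contradicting the assumption, or locally modifies $S$ into a 1-sparse 9-set compatible with $v$.

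The hardest step is the direct part of the upper bound, since $C_4$-freeness is less combinatorially rigid than the perfectness assumption exploited in Theorem \ref{thm:4-10}; the main difficulty is to ensure that the local modification of $S$ can always be performed regardless of whether $v$'s neighbors in $S$ are isolated or matched. As a fallback, and following the same philosophy used to verify the perfect-graph results in Section \ref{sec:perfect}, I would run Algorithm \ref{algo:CHECK} starting from the full set $\mathcal{T}_{17}^{\mathcal{BIP}}(1,4,10)$ obtained from the lower-bound computation; by Remark \ref{rem:algo}, the algorithm returning an empty set on 18 vertices completes the proof of $R_1^{\mathcal{BIP}}(4,10)\leq 18$.
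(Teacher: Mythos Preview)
Your fallback contains a genuine gap. Seeding Algorithm~\ref{algo:CHECK} with $\mathcal{T}_{16}^{\mathcal{BIP}}(1,4,9)$ and iterating once with parameters $(1,4,10)$ yields only those graphs in $\mathcal{T}_{17}^{\mathcal{BIP}}(1,4,10)$ that contain some member of $\mathcal{T}_{16}^{\mathcal{BIP}}(1,4,9)$ as an induced subgraph; it does not yield the full set. Indeed, if $G\in\mathcal{T}_{17}^{\mathcal{BIP}}(1,4,10)$ then $G$ has no $1$-sparse $10$-set, but $G-v$ may possess a $1$-sparse $9$-set for every vertex $v$, in which case no $16$-vertex induced subgraph of $G$ lies in $\mathcal{T}_{16}^{\mathcal{BIP}}(1,4,9)$. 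Running one further step from this incomplete list therefore cannot certify $\mathcal{T}_{18}^{\mathcal{BIP}}(1,4,10)=\emptyset$. Your direct argument is also only a sketch: the ``pigeonhole argument'' for high-degree $v$ is not specified, and the swap-and-extend manoeuvre for low-degree $v$ relies on a replacement vertex whose existence you do not establish.

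The paper organises the proof differently and sidesteps both issues. For the lower bound it simply adjoins an isolated vertex to the unique $16$-vertex extremal graph for $R_1^{\mathcal{BIP}}(4,9)$; no computation is needed. For the upper bound it splits on the minimum degree. If every vertex has degree at least four, pick any $u$ with neighbors $x_1,\dots,x_4$; $C_4$-freeness forces $N(x_i)\cap N(x_j)=\{u\}$ for $i\neq j$, and bipartiteness makes $\bigcup_i N(x_i)$ an independent set of size at least $13$, yielding a $1$-sparse $10$-set outright. If some $v$ has $d(v)\leq 3$, the paper removes $N[v]$ rather than just $v$: either $G-N[v]$ contains a $1$-sparse $9$-set $J$, and then $J\cup\{v\}$ is $1$-sparse of size $10$ since $v$ has no neighbor in $J$; or $G-N[v]\in\mathcal{T}_m^{\mathcal{BIP}}(1,4,9)$ for some $m\geq 14$, so $G$ contains a member of $\mathcal{T}_{14}^{\mathcal{BIP}}(1,4,9)$. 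A targeted run of Algorithm~\ref{algo:CHECK} seeded from $\mathcal{T}_{14}^{\mathcal{BIP}}(1,4,9)$ with parameters $(1,4,10)$ then shows no such $18$-vertex $G$ exists. Deleting $N[v]$ instead of $v$ is precisely what makes the extension step trivial and removes the local-modification difficulty you flagged.
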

	
	\begin{proof}
		From  \cite{defectiveramseynumbers}, we know $R_1^{\mathcal{BIP}}(4,9)=17$ which trivially implies $R_1^{\mathcal{BIP}}(4,10)\geq18$ since we can add an isolated vertex to the (unique) extremal graph of $R_1^{\mathcal{BIP}}(4,9)$ (which has 16 vertices) and obtain a bipartite graph which has no $1$-dense $4$-set and no $1$-sparse $10$-set. Now, we will show that any bipartite graph on $18$ vertices has either a $1$-dense $4$-set or a $1$-sparse $10$-set. Take a bipartite graph $G$ on $18$ vertices. If $G$ has a $1$-dense $4$-set, we are done, thus assume that $G$ has no $1$-dense $4$-set. We will prove that $G$ has a $1$-sparse $10$-set. Suppose there exists a vertex $v$ of degree at most $3$.  If $G-N[v]$ has a $1$-sparse $9$-set, say $J$, then $J\cup \{v\}$ is a $1$-sparse $10$-set. Otherwise, $G-N[v]\in\mathcal{T}_{m}^\mathcal{BIP}(1,4,9)$ for some $m\geq 14$. Noting that all graphs in $\mathcal{T}_{m}^\mathcal{BIP}(1,4,9)$ where $m\geq 14$ can be produced by the Algorithm \ref{algo:CHECK} by setting the parameters $k=1$, $i=4$, $j=10$ and starting with $W=\mathcal{T}_{14}^\mathcal{BIP}(1,4,9)\subseteq\mathcal{T}_{14}^\mathcal{BIP}(1,4,10)$. We run Algorithm \ref{algo:CHECK} with these settings and observe that there is no bipartite graph on $18$ vertices that has an induced subgraph from $\mathcal{T}_{m}^\mathcal{BIP}(1,4,9)$ for $m\geq14$. So, $G-N[v]$ has again a $1$-sparse $9$-set, say $J$, then $J\cup \{v\}$ is a $1$-sparse $10$-set and we are done.  

Now, assume that all vertices in $G$ have degree at least four. Take a vertex $u$ with its four neighbors $u_1$,$u_2$,$u_3$,$u_4$. Since $G$ has no $1$-dense $4$-set, $u$ is the unique common neighbor of any two of $x_1$, $x_2$, $x_3$, $x_4$. Moreover, since $G$ is bipartite, $N(x_1)\cup N(x_2)\cup N(x_3)\cup N(x_4)$ is an independent set (thus a 1-sparse set) of size at least $13$, which completes the proof.		
	\end{proof}

\begin{theorem}\label{thm:bipartite_1_4_11}
	$R_1^{\mathcal{BIP}}(4,11)=20$.
\end{theorem}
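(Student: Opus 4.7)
The plan is to mirror closely the argument just used for $R_1^{\mathcal{BIP}}(4,10)=18$: a computer-assisted construction for the lower bound, together with a short case analysis and one algorithmic check for the upper bound. I would make essential use of the set $\mathcal{T}_{16}^{\mathcal{BIP}}(1,4,10)$ and of the family of extremal graphs for $R_1^{\mathcal{BIP}}(4,10)$, both of which have already been produced by Algorithm \ref{algo:CHECK} during the proof of the preceding theorem.

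For the lower bound, I would first note that the naive construction of attaching isolated vertices to a $17$-vertex extremal graph of $R_1^{\mathcal{BIP}}(4,10)$ only yields $R_1^{\mathcal{BIP}}(4,11)\geq 19$: any such extremal graph still contains a $1$-sparse $9$-set (because $R_1^{\mathcal{BIP}}(4,9)=17$), so appending two isolated vertices creates a $1$-sparse $11$-set. To reach $19$ vertices, I would feed $W=\mathcal{T}_{17}^{\mathcal{BIP}}(1,4,10)\subseteq\mathcal{T}_{17}^{\mathcal{BIP}}(1,4,11)$ into Algorithm \ref{algo:CHECK} with parameters $k=1$, $i=4$, $j=11$ and run it through two extension steps; at least one of the produced $19$-vertex bipartite graphs should avoid every $1$-dense $4$-set and every $1$-sparse $11$-set, giving $R_1^{\mathcal{BIP}}(4,11)\geq 20$.

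For the upper bound, fix a bipartite graph $G$ on $20$ vertices with no $1$-dense $4$-set. If every vertex has degree at least $4$, take an arbitrary $u$ with four neighbors $u_1,u_2,u_3,u_4$; exactly as in the previous theorem, the absence of a $1$-dense $4$-set and bipartiteness force $\{u_1,u_2,u_3,u_4\}$ to be independent with $N(u_i)\cap N(u_j)=\{u\}$ for all $i\neq j$, and $\bigcup_{i=1}^{4}N(u_i)$ lies in the opposite part, so by inclusion--exclusion $|\bigcup_{i=1}^{4}N(u_i)|\geq 4\cdot 4-3=13$, and any $11$-subset of it is a $1$-sparse $11$-set. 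Otherwise, pick $v$ with $d(v)\leq 3$ and set $G'=G-N[v]$. If $d(v)\leq 1$, then $|V(G')|\geq 18=R_1^{\mathcal{BIP}}(4,10)$, so $G'$ must contain a $1$-sparse $10$-set $J$ (a $1$-dense $4$-set being excluded by hypothesis), and $J\cup\{v\}$ is a $1$-sparse $11$-set. If $d(v)\in\{2,3\}$, then $|V(G')|\in\{16,17\}$, and either $G'$ again contains a $1$-sparse $10$-set (finishing the argument as above) or $G'\in\mathcal{T}_{m}^{\mathcal{BIP}}(1,4,10)$ for some $m\in\{16,17\}$. To rule out the latter, I would run Algorithm \ref{algo:CHECK} with $k=1$, $i=4$, $j=11$ starting from $W=\mathcal{T}_{16}^{\mathcal{BIP}}(1,4,10)\subseteq\mathcal{T}_{16}^{\mathcal{BIP}}(1,4,11)$ through four extension levels and verify that no $20$-vertex bipartite graph in $\mathcal{T}_{20}^{\mathcal{BIP}}(1,4,11)$ is produced, contradicting the existence of $G$.

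The main obstacle is computational rather than conceptual: four extensions on top of a potentially large $\mathcal{T}_{16}^{\mathcal{BIP}}(1,4,10)$, under the looser forbidden-set constraint $j=11$, risks a combinatorial blow-up, with the repeated isomorphism tests being the likely bottleneck. If the run proves too slow, one natural simplification would be to start instead from $\mathcal{T}_{17}^{\mathcal{BIP}}(1,4,10)$ and handle the $d(v)=3$ subcase separately, and to prune branches whose extensions force a vertex of degree at least $4$ in the last step (these cases are already handled by the minimum-degree argument). By contrast, the two-step lower-bound run is expected to be essentially routine.
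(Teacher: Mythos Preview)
Your approach is logically sound but differs from the paper's in both halves, and the paper's route is considerably lighter computationally.

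For the lower bound, the paper avoids a search entirely: it takes the unique $16$-vertex extremal graph $H$ for $R_1^{\mathcal{BIP}}(4,9)=17$ and forms the disjoint union $H\cup K_{1,2}$. Since $H$ has no $1$-sparse $9$-set, the only way to build a $1$-sparse $11$-set in $H\cup K_{1,2}$ would require at least nine vertices from $H$, which is impossible; and $K_{1,2}$ contributes no $1$-dense $4$-set. This gives a clean, certificate-only witness for $R_1^{\mathcal{BIP}}(4,11)\geq 20$, whereas your plan spends two extension levels of Algorithm~\ref{algo:CHECK} to discover essentially the same object.

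For the upper bound, the paper pushes the structural analysis one step further than you do. After reducing to $|A|=|B|=10$ and minimum degree at least $2$, it shows (via a short counting argument on the neighbourhoods of a degree-$3$ vertex) that $G$ must contain two \emph{adjacent} vertices $u,v$ each of degree at most $3$. Removing a carefully chosen $6$-set $\{u,v,a,b,c,d\}$ containing $N(u)\cup N(v)$ leaves a $14$-vertex graph; if it has a $1$-sparse $9$-set $S$, then $S\cup\{u,v\}$ is $1$-sparse of size $11$ because $u$ and $v$ see only each other in $S\cup\{u,v\}$. The residual case is therefore $G-\{u,v,a,b,c,d\}\in\mathcal{T}_{14}^{\mathcal{BIP}}(1,4,9)$, a set of only $73$ graphs, and the computer step consists of gluing each of these to one of three fixed $6$-vertex templates and checking for a $1$-sparse $11$-set. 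Your plan, by contrast, removes a single closed neighbourhood, lands in $\mathcal{T}_{16}^{\mathcal{BIP}}(1,4,10)$, and then needs four full extension levels under the weaker constraint $j=11$; this is exactly the blow-up you worry about, and the paper sidesteps it by trading a bit more hand analysis for a drastically smaller search.
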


\begin{proof}
	From \cite{defectiveramseynumbers}, we know that there is a unique extremal graph, say $H$, on 16 vertices for $R_1^{\mathcal{BIP}}(4,9)=17$. Consider the disjoint union of $H$ and $K_{1,2}$. Clearly, this graph is a bipartite graph on 19 vertices and it has no $1$-dense $4$-set and no 1-sparse 11-set. Thus, $R_1^{\mathcal{BIP}}(4,11)\geq 20$. 
	
Now, let $G$ be a bipartite graph on 20 vertices and let us show that it has either a 1-dense 4-set or a 1-sparse 11-set. If it has a 1-dense 4-set, we are done. So, assume $G$ does not contain a 1-dense 4-set. Let $(A,B)$ be a bipartition of $G$. If $|A|\geq 11$ or $|B|\geq 11$, then we are done by taking $A$ or $B$, respectively, as a 1-sparse set. Hence, assume $|A|=|B|=10$. Besides, if $G$ has a vertex $w$ of degree at most one, wlog say $w\in A$, then $B\cup \{w\}$ is a 1-sparse 11-set. Thus, suppose all vertices in $G$ have degree at least two.
	
 Take a vertex $\omega\in G$ of maximum degree $d(w)=k$. Say wlog $\omega\in A$ and let $N(w)=\{x_1,x_2,...,x_k\}$ and $M_i=N(x_i)-\omega$. Assume wlog $|M_1|\leq|M_2|\leq\ldots\leq |M_k|$. Note that $M_i\cap M_j=\emptyset$ since $G$ has no $1$-dense $4$-set, and $|M_1|+|M_2|+\ldots+|M_k|\leq 9$. Hence, if $k\geq6$, we get $|M_1|=|M_2|=1$. Then $(A-w)\cup\{x_1,x_2\}$ is a $1$-sparse $11$-set and we are done. So, assume $k\leq 5$.
	
Now, if $G$ has no vertex of degree 3, then $|M_i|\in\{1,3,4\}$. Hence if $k\geq 4$, $|M_1|+|M_2|+|M_3|+|M_4|\leq 9$ implies $|M_1|=|M_2|=1$ and we are done as previously. Since $k\neq 3$, we have $k=2$ which means that  we obtain again $|M_1|=|M_2|=1$ and the result follows similarly. It follows that there exists a vertex $x$, say wlog in $A$, with $d(x)=3$. Let $N(x)=\{a,b,c\}$. 
	
We claim $G$ has two adjacent vertices both of which has degree at most three. If one of $\{a,b,c\}$ has the degree at most three, the claim holds. Assume $d(a),d(b),d(c)\geq 4$, since $N(a)-\{x\}$, $N(b)-\{x\}$ and $N(c)-\{x\}$ are pairwise disjoint subsets of $A-\{x\}$ and $|A-\{x\}|=9$, we get $|N(a)-\{x\}|=|N(b)-\{x\}|=|N(c)-\{x\}|=3$. Take a vertex $y\in B-\{a,b,c\}$. Since $G$ has no $1$-dense $4$-set, $y$ has at most one neighbor from each one of $N(a)-\{x\}$, $N(b)-\{x\}$, $N(c)-\{x\}$, thus $d(y)\leq 3$. Therefore, there are at most $3\cdot 7 + 4\cdot 3=33$ edges between $A$ and $B$. Then, by the pigeonhole principle, there exists $z\in A-\{x\}$ with $d(z)\leq 3$. Since $|N(z)\cap\{a,b,c\}|\leq 1$, $z$ should have a neighbor in $B-\{a,b,c\}$, and so the claim holds. As a result, $G$ has two adjacent vertices $u$ and $v$ such that $d(u)\leq 3$ and $d(v)\leq 3$.
	
Since all degrees in $G$ are at least two, there are three cases:
	\begin{enumerate}
		\item $d(u)=d(v)=2$. Let $c$ and $a$ be the unique neighbors of $u$ and $v$ other than $\{u,v\}$, respectively. Since $d(a)\geq 2$, $a$ has at least one neighbor other than $v$, take one of them and say $d$. Similarly, take a neighbor $b\neq a$ of $c$.
		\item One of $u$ and $v$ has degree three, the other has degree two, wlog say $d(u)=3$ and $d(v)=2$. Let $c$ and $d$ be the neighbors of $u$ other than $v$. Since $d(v),d(c)\geq 2$ and $G$ has no 1-dense 4-set, $v$ and $c$ have distinct neighbors, say $a$ and $b$, respectively. 
		\item $d(u)=d(v)=3$. Let $N(u)=\{v,c,d\}$ and $N(v)=\{u,a,b\}$.
	\end{enumerate}

\begin{figure}[h]
\centering
    \begin{tabularx}{1\textwidth}{*{3}{>{\centering\arraybackslash}X}}
\begin{tikzpicture}[
every edge/.style = {draw=black,very thick},
 vrtx/.style args = {#1/#2}{%
      circle, draw, thick, fill=white,
      minimum size=5mm, label=#1:#2}
                    ]
\node(U) [vrtx=center/$u$] at (-1, 2) {};
\node(V) [vrtx=center/$v$] at (1, 2) {};
\node(A) [vrtx=center/$a$] at (-1,1) {};
\node(C) [vrtx=center/$c$] at (1,1) {};
\node(B) [vrtx=center/$b$] at (-1,0) {};
\node(D) [vrtx=center/$d$] at (1,0) {};
\path   (U) edge (V)
(U) edge (C)
(A) edge (V)
(A) edge (D)	
(B) edge (C);
\end{tikzpicture}
    \caption{Case 1}  
\label{fig:case1}
    &   
\begin{tikzpicture}[
every edge/.style = {draw=black,very thick},
 vrtx/.style args = {#1/#2}{%
      circle, draw, thick, fill=white,
      minimum size=5mm, label=#1:#2}
                    ]
\node(U) [vrtx=center/$u$] at (-1, 2) {};
\node(V) [vrtx=center/$v$] at (1, 2) {};
\node(A) [vrtx=center/$a$] at (-1,1) {};
\node(C) [vrtx=center/$c$] at (1,1) {};
\node(B) [vrtx=center/$b$] at (-1,0) {};
\node(D) [vrtx=center/$d$] at (1,0) {};
\path   (U) edge (V)
(U) edge (C)
(A) edge (V)
(U) edge (D)	
(B) edge (C);
\end{tikzpicture}
\caption{Case 2}
\label{fig:case2}
    &   
\begin{tikzpicture}[
every edge/.style = {draw=black,very thick},
 vrtx/.style args = {#1/#2}{%
      circle, draw, thick, fill=white,
      minimum size=5mm, label=#1:#2}
                    ]
\node(U) [vrtx=center/$u$] at (-1, 2) {};
\node(V) [vrtx=center/$v$] at (1, 2) {};
\node(A) [vrtx=center/$a$] at (-1,1) {};
\node(C) [vrtx=center/$c$] at (1,1) {};
\node(B) [vrtx=center/$b$] at (-1,0) {};
\node(D) [vrtx=center/$d$] at (1,0) {};
\path   (U) edge (V)
(U) edge (C)
(A) edge (V)
(U) edge (D)	
(B) edge (V);
\end{tikzpicture}
\caption{Case 3}
\label{fig:case3}    
    \end{tabularx}
\end{figure} 
Since $G$ has no $1$-dense $4$-set, all edges present between $\{u,v,a,b,c,d\}$ in each case are given in Figures \ref{fig:case1}, \ref{fig:case2}, and \ref{fig:case3}, respectively. Now, if $G-\{u,v,a,b,c,d\}$ has a $1$-sparse $9$-set, say $S$, then $S\cup\{u,v\}$ is a 1-sparse $11$-set and we are done. So assume $G-\{u,v,a,b,c,d\}$ has no $1$-sparse $9$-set. Then we have $G-\{u,v,a,b,c,d\}\in \mathcal{T}_{14}^{\mathcal{BIP}}(1,4,9)$. Using Algorithm \ref{algo:CHECK}, we found all 73 graphs in $\mathcal{T}_{14}^{\mathcal{BIP}}(1,4,9)$, and examined all possible ways to combining these 73 graphs with one of the Cases 1, 2 and 3 to construct a bipartite graph on 20 vertices with no 1-dense 4-set. 
For all the resulting graphs, we could identify a $1$-sparse $11$-set, which proves that $R_1^{\mathcal{BIP}}(4,11)=20$.
\end{proof}

Using Algorithm \ref{algo:CHECK}, we also compute some $k$-defective Ramsey numbers in bipartite graphs for $k\in\{2,3,4\}$ where these values were not known previously. In \cite{defectiveramseynumbers}, it has been stated that $R_k^{\mathcal{BIP}}(i,j)$ was open for $k+3\leq i\leq 2k+2$ and $j\geq k+2$ when $k\geq 2$.  
Using Algorithm \ref{algo:CHECK}, we found some non-trivial values of $R_k^{\mathcal{BIP}}(k+3,j)$ for several $j$ values. Note that we did not use bold font to distinguish newly computed values since all values in Tables \ref{table:2-defectivebipartite}, \ref{table:3-defectivebipartite} and \ref{table:4-defectivebipartite} are new. All extremal graphs in these tables are available in our github account \cite{github}. Unlike for perfect graphs, we check bipartiteness just before checking forbidden defective sets since bipartite graphs can be recognized very efficiently as compared to perfect graphs. We note that in these calculations, we obtained defective Ramsey numbers up to 15 in less than five minutes, so the following tables can be extended by allowing the algorithm to run longer.

	\begin{theorem}
		The following hold where we denote the number of extremal graphs in parenthesis:

\begin{table}[H]\centering
			\scalebox{1}{
				\begin{tabular}{|C|C|C|C|C|C|}
					\hline
					\textbf{$R_2^{\mathcal{BIP}}(i,j)$} & \textbf{4} & \textbf{5} & \textbf{6} & \textbf{7} & \textbf{8}  \\
					\hline
					
					\textbf{5} & 5(1) & 6(4) & 8(1) & 10(2)& 11(56)\\
					\hline    
					
					\textbf{6} & 5(1) & 7(3) & 9(1) & 11(35) & 14(3)\\
					\hline

					\textbf{7} & 5(1) & 9(2) & 11(6) & 13(249) & \\
					\hline

					\textbf{8} & 5(1) & 9(2) & 11(6) & 13(249) & \\
					\hline

				\end{tabular}
			}
			\caption{2-Defective Ramsey Numbers in Bipartite Graphs}
			\label{table:2-defectivebipartite}
		\end{table}
		\begin{table}[H]

			\centering
			\scalebox{1}{
				\begin{tabular}{|C|C|C|C|C|C|C|}
					\hline
					\textbf{$R_3^{\mathcal{BIP}}(i,j)$} & \textbf{5} & \textbf{6} & \textbf{7} & \textbf{8} & \textbf{9}& \textbf{10}\\
					\hline
					
					\textbf{6} & 6(1) & 7(5) & 9(1) & 10(8) & 12(1) & 13(9) \\
					\hline
					
					\textbf{7} & 6(1) & 8(2) & 10(1) & 12(2)&14(26)&\\
					\hline
					
					\textbf{8} & 6(1) & 8(2) & 10(10) & 13(2)& 15(423)&\\
					\hline

					\textbf{9} & 6(1) & 8(2) & 13(5) & 15(40)& &\\
					\hline
				\end{tabular}

			}
			
			\caption{3-Defective Ramsey Numbers in Bipartite Graphs}
			\label{table:3-defectivebipartite}
		\end{table}
		\begin{table}[H]
			\centering
			\scalebox{1}{
				\begin{tabular}{|C|C|C|C|C|C|C|}
					\hline
					\textbf{$R_4^{\mathcal{BIP}}(i,j)$} & \textbf{6} & \textbf{7} & \textbf{8} & \textbf{9} & \textbf{10}  \\
					\hline
					
					\textbf{7} & 7(1) & 8(6) & 10(1) & 11(7) & 12(34) \\
					\hline
					
					\textbf{8} & 7(1) & 9(2) & 11(1) & 12(29) & 14(16) \\
					\hline
					
					\textbf{9} & 7(1) & 9(2) & 11(7) & 13(19) & 15(133) \\
					\hline

					\textbf{10} & 7(1) & 9(2) & 11(7) & 13(70) &  \\
					\hline
					
				\end{tabular}
			}
			
			\caption{4-Defective Ramsey Numbers in Bipartite Graphs}
			\label{table:4-defectivebipartite}
		\end{table}

\end{theorem}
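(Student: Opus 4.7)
The plan is to establish every entry of Tables \ref{table:2-defectivebipartite}, \ref{table:3-defectivebipartite} and \ref{table:4-defectivebipartite} as a separate application of Remark \ref{rem:algo}, using Algorithm \ref{algo:CHECK} specialized to the class $\mathcal{BIP}$. For each target triple $(k,i,j)$, I would initialize the recursion with $W=\mathcal{T}_{1}^{\mathcal{BIP}}(k,i,j)=\{K_1\}$, then repeatedly feed the output back in as the new input. By the argument already laid out in Remark \ref{rem:algo}, the first $n$ at which the returned set $K$ becomes empty satisfies $R_k^{\mathcal{BIP}}(i,j)=n$, and the previous non-empty output is exactly the set $\mathcal{T}_{n-1}^{\mathcal{BIP}}(k,i,j)$ of extremal graphs. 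The parenthesized counts in the tables are therefore obtained by simply recording $|\mathcal{T}_{n-1}^{\mathcal{BIP}}(k,i,j)|$ after the isomorphism-reduction step.

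The specific modification needed for the bipartite case enters only at line \ref{line:G} of Algorithm \ref{algo:CHECK}: the membership test $G_S\in\mathcal{G}$ becomes a bipartiteness check, which can be done in linear time via a BFS-based two-coloring of $G_S$. As the paper notes, bipartiteness is cheap enough that I would place this check \emph{before} the forbidden $k$-defective set check in the inner loop, discarding a candidate $G_S$ as soon as it ceases to be bipartite. Because every input graph in $W$ is already bipartite by construction, the only way $G_S$ can fail bipartiteness is if the subset $S$ chosen in line \ref{line:S} straddles the bipartition of $G$ incorrectly, so in practice only a thin slice of the $2^{|V(G)|}$ subsets $S$ survive.

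Next, I would run the recursion independently for each cell $(i,j)$ appearing in Tables \ref{table:2-defectivebipartite}--\ref{table:4-defectivebipartite}, continuing until the empty output is reached; the diagonal-type entries $R_k^{\mathcal{BIP}}(k+3, k+2)=k+3$ and the closely related small cases can be read off at the very first iterations as sanity checks, confirming the implementation against known values such as $R_2^{\mathcal{BIP}}(5,4)=5$ or $R_3^{\mathcal{BIP}}(6,5)=6$. For the larger entries, in particular those producing hundreds of extremal graphs such as $R_3^{\mathcal{BIP}}(8,9)=15$ with $423$ extremal graphs or $R_4^{\mathcal{BIP}}(9,10)=15$ with $133$ extremal graphs, I would simply let the recursion continue; the paper has already observed that all such cells complete in under five minutes.

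The main obstacle is not mathematical but computational and verificational: ensuring that the isomorphism reduction at the final line of Algorithm \ref{algo:CHECK} is correct, so that extremal-graph counts are neither inflated by unremoved duplicates nor deflated by spurious rejections. I would rely on the isomorphism routine from \cite{defectiveRamseyJohnChappell} already used elsewhere in the paper, and additionally cross-validate by (a) confirming that adding an isolated vertex to any graph in $\mathcal{T}_{n-1}^{\mathcal{BIP}}(k,i,j)$ with $j>i$ yields a member of $\mathcal{T}_{n}^{\mathcal{BIP}}(k,i,j+1)$ when the appropriate 1-sparse bound is respected, and (b) spot-checking that each reported extremal graph is bipartite, contains no $k$-dense $i$-set, and contains no $k$-sparse $j$-set, with the explicit certificates uploaded to the github repository \cite{github}.
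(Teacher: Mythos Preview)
Your proposal is correct and matches the paper's approach exactly: the tabulated values are obtained by running Algorithm~\ref{algo:CHECK} recursively from $W=\{K_1\}$ as in Remark~\ref{rem:algo}, with the class membership test replaced by a bipartiteness check performed before the forbidden $k$-defective set check. The paper offers no further argument beyond this computer enumeration (together with the github certificates), so there is nothing to add.
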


As supported by the above tables, we obtain the following result which settles one of the aforementioned open cases for bipartite graphs pointed out in \cite{defectiveramseynumbers}.  
\begin{theorem}
For all $k\geq 1$ and $k+3\leq i\leq 2k+2$, we have $R_k^{\mathcal{BIP}}(i,k+2)=k+3$.
\end{theorem}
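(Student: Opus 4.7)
My plan is to establish the upper and lower bounds separately via direct arguments.

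For the lower bound $R_k^{\mathcal{BIP}}(i, k+2) \geq k+3$, I will exhibit the star $K_{1,k+1}$: it is bipartite, has $k+2$ vertices, contains no $k$-dense $i$-set (since $i \geq k+3$ exceeds $|V(K_{1,k+1})|$), and its only $(k+2)$-subset, namely $V(K_{1,k+1})$ itself, is not $k$-sparse because the center has $k+1 > k$ neighbors.

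For the upper bound $R_k^{\mathcal{BIP}}(i, k+2) \leq k+3$, let $G$ be an arbitrary bipartite graph on $k+3$ vertices with bipartition $(A, B)$. First, if either part has size at least $k+2$, it yields an independent (hence $0$-sparse, and a fortiori $k$-sparse) $(k+2)$-set. So I may assume $|A|, |B| \in \{2, \dots, k+1\}$. The strategy is to show that some $V(G) \setminus \{v\}$ is a $k$-sparse $(k+2)$-set: by direct inspection this is equivalent to requiring that every $w \neq v$ with $\deg_G(w) \geq k+1$ is adjacent to $v$. Let $H = \{w \in V(G) : \deg_G(w) \geq k+1\}$; the bound $|A|, |B| \leq k+1$ forces $\deg_G(w) = k+1$ for every $w \in H$, and each $w \in H \cap A$ satisfies $N(w) = B$ (and symmetrically for $H \cap B$).

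The analysis then splits on how $H$ meets the bipartition. If $H = \emptyset$, any vertex may be removed. If $H \subseteq A$ (or symmetrically $H \subseteq B$), then every vertex in $B$ is adjacent to all of $H$, so removing any $v \in B$ works. The remaining case is that $H$ meets both parts, which requires $|A| \geq k+1$ and $|B| \geq k+1$; combined with $|A| + |B| = k+3$, this is possible only when $k = 1$ and $|A| = |B| = 2$.

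I expect the main obstacle to lie in this $k=1$ corner case, where one must verify that even with high-degree vertices on both sides, a suitable $v$ can still be found. I would handle it by explicit analysis of $|H \cap A|$ and $|H \cap B|$: if either equals $1$, removing its single element $v$ leaves only high-degree vertices on the opposite part, all adjacent to $v$, which again produces a $k$-sparse $(k+2)$-set. The only unhandled configuration is $|H \cap A| = |H \cap B| = 2$, which forces $G = K_{2,2}$; there $V(G)$ is a $1$-dense $4$-set since each vertex has exactly one non-neighbor, yielding the required $k$-dense $i$-set as $i = k+3 = 4$ is forced. For $k \geq 2$ the bipartition constraint already precludes this corner case, and the removal argument goes through uniformly.
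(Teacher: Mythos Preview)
Your proof is correct. The lower bound via $K_{1,k+1}$ is identical to the paper's. For the upper bound, both you and the paper argue by deleting a single vertex from a bipartite graph on $k+3$ vertices once both parts have size at most $k+1$, but the criteria for which vertex to delete differ. The paper simply asserts that one can choose a $(k+2)$-subset meeting each side in at most $k$ vertices, which is automatically $k$-sparse. Your route instead tracks the set $H$ of vertices of degree $k+1$ and deletes a vertex adjacent to all of $H$, which is a sharper condition.

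The payoff of your extra care is that it actually handles the case $k=1$, $|A|=|B|=2$. There the paper's criterion is vacuous: a $3$-subset with at most one vertex from each side does not exist, so the paper's argument as written leaves $K_{2,2}$ (and its spanning subgraphs) unaddressed. Your analysis catches this, reducing to $G=K_{2,2}$ and observing that $V(G)$ itself is the required $1$-dense $4$-set. For $k\ge 2$ the paper's shortcut works and is slightly slicker; your argument is a bit longer but uniform across all $k\ge 1$.
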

\begin{proof}
Consider a complete bipartite graph with one vertex in one part and $k+1$ vertices in the other part. It has no $k$-dense $i$-set (since there are less than $i$ vertices) and no $k$-sparse $(k+2)$-set (since there is a vertex of degree $k+1$). Now, take a bipartite graph of order at least $k+3$. If one part contains $k+2$ vertices, they form a $k$-sparse $(k+2)$-set. So assume each part contains at most $k+1$ vertices. Then, any subset of $k+2$ vertices containing at most $k$ vertices from each side forms a $k$-sparse set. 
\end{proof}

	\section{Defective Ramsey Numbers in Chordal Graphs}	\label{sec:chordal}

	\noindent In this section, we use Algorithm \ref{algo:CHECK} as described in Remark \ref{rem:algo} to compute new defective Ramsey numbers in chordal graphs and determine their extremal graphs. A graph is \textit{chordal} if it contains no induced  cycle of length four or more. Some characteristics of chordal graphs enables us to generate the set $\mathcal{T}_{n+1}^\mathcal{CH}(k,i,j)$ in an efficient way (from the input set $\mathcal{T}_n^\mathcal{CH}(k,i,j)$). A vertex is called \textit{simplicial} if its neighborhood induce a compete graph. A \textit{perfect elimination ordering} $\sigma$ in a graph is an ordering of the vertices of the graph such that, every vertex $v$ is simplicial in the graph induced by the vertices coming after $v$ in $\sigma$. It is known that every chordal graph has a simplicial vertex; moreover, a graph is chordal if and only if it has a perfect elimination ordering \cite{chordal_peo}.

\begin{lemma}\label{lem:chordal}
The set $\mathcal{T}_{n+1}^\mathcal{CH}(k,i,j)$ can be generated by adding only simplicial vertices to the graphs in the set $\mathcal{T}_n^\mathcal{CH}(k,i,j)$ in line \ref{line:allS} and without checking chordality in line \ref{line:G} of Algorithm \ref{algo:CHECK}. 
\end{lemma}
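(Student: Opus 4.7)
The plan is to establish the lemma by proving two claims that together justify both simplifications in the algorithm: (1) every graph in $\mathcal{T}_{n+1}^\mathcal{CH}(k,i,j)$ arises from some graph in $\mathcal{T}_n^\mathcal{CH}(k,i,j)$ by the addition of a simplicial vertex (so restricting $S$ in line \ref{line:allS} to sets inducing cliques loses no output), and (2) adding a simplicial vertex to a chordal graph always produces a chordal graph (so the chordality test in line \ref{line:G} becomes redundant). Both rely only on the characterization of chordal graphs via perfect elimination orderings cited just above the lemma.

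For the first claim, I would take an arbitrary $G' \in \mathcal{T}_{n+1}^\mathcal{CH}(k,i,j)$. Since $G'$ is chordal, it admits a perfect elimination ordering, and in particular $G'$ contains at least one simplicial vertex $v$. Set $G := G' - v$. Then $G$ is chordal as an induced subgraph of a chordal graph, and $G$ has neither a $k$-dense $i$-set nor a $k$-sparse $j$-set because any such set would also be a forbidden $k$-defective set of $G'$. Hence $G \in \mathcal{T}_n^\mathcal{CH}(k,i,j)$, and $G'$ is precisely the graph obtained from $G$ by reinserting $v$, whose neighborhood $N_{G'}(v)$ is a clique of $G$. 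So every member of $\mathcal{T}_{n+1}^\mathcal{CH}(k,i,j)$ is produced by Algorithm \ref{algo:CHECK} even when the range of $S$ in line \ref{line:allS} is restricted to subsets of $V(G)$ inducing complete subgraphs.

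For the second claim, suppose $G \in \mathcal{T}_n^\mathcal{CH}(k,i,j)$ and let $G_S$ be obtained by adding a new vertex $v$ adjacent to a set $S \subseteq V(G)$ with $G[S]$ complete. Assume, for contradiction, that $G_S$ contains an induced cycle $C$ of length at least four. If $v \notin V(C)$, then $C$ is an induced cycle of $G$, contradicting the chordality of $G$. Otherwise, the two neighbors $u,w$ of $v$ on $C$ both belong to $S = N_{G_S}(v)$; since $G[S]$ is a clique, $uw \in E(G_S)$, contradicting the fact that $u$ and $w$ are non-adjacent on an induced cycle of length $\geq 4$. Hence $G_S$ is chordal, and the membership test in line \ref{line:G} is automatic.

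Combining these two observations, line \ref{line:allS} may iterate only over subsets $S$ inducing cliques in $G$, and line \ref{line:G} needs no chordality verification, while the output set remains exactly $\mathcal{T}_{n+1}^\mathcal{CH}(k,i,j)$. I do not foresee a serious obstacle: the proof is a direct use of the simplicial-vertex characterization together with the induced-subgraph-closure of both chordality and the absence of forbidden $k$-defective sets; the only care needed is to enumerate the simplicial subsets $S$ correctly (i.e.\ all cliques of $G$, including the empty one, which corresponds to adding an isolated vertex).
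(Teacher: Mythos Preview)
Your proposal is correct and follows essentially the same approach as the paper. The only cosmetic difference is in the second claim: the paper argues that prepending the new simplicial vertex to a perfect elimination ordering of $G$ yields one for $G_S$, whereas you argue directly that no long induced cycle can pass through a simplicial vertex; both are standard and equivalent justifications.
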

\begin{proof}
Let $G \in \mathcal{T}_n^\mathcal{CH}(k,i,j)$ and $G'$ be a graph in $\mathcal{T}_{n+1}^\mathcal{CH}(k,i,j)$ such that $G'$ contains $G$ as an induced subgraph. Let $x$ be the (unique) vertex in $V(G')\setminus V(G)$. Then, for any vertex $y\neq x$ in $G'$, the graph $G' -y \in \mathcal{T}_n^\mathcal{CH}(k,i,j)$ (since otherwise $G'$ would contain a $k$-dense $i$-set or $k$-sparse $j$-set, thus not belong to $\mathcal{T}_{n+1}^\mathcal{CH}(k,i,j)$). Since every chordal graph has a simplicial vertex, this also holds for a simplicial vertex $y\neq x$ in $G'$ (whose existence is guaranteed since $G$ is chordal), implying that all graphs in $\mathcal{T}_{n+1}^\mathcal{CH}(k,i,j)$ can be obtained by adding a simplicial vertex to a graph in $\mathcal{T}_{n}^\mathcal{CH}(k,i,j)$. Accordingly, in line \ref{line:allS} of Algorithm \ref{algo:CHECK}, it is sufficient to select only subsets of vertices forming a clique as the neighborhood of the newly added vertex. Moreover, all graphs constructed in this way are chordal since a perfect elimination ordering $\sigma$ of $G$, preceded by the newly added simplicial vertex $y$ is a perfect elimination order for $G'$. Therefore, the chordality check in line \ref{line:G} would always return a positive answer, thus it can be omitted. 
\end{proof}

Following Lemma \ref{lem:chordal}	, we can assume that the newly added vertex in line \ref{line:allS} is simplicial. Accordingly, we only consider subsets of vertices forming a clique when generating new graphs. Moreover, we do not check chordality in line \ref{line:G} as it is already ensured by Lemma \ref{lem:chordal}. We report the values of $k$-defective Ramsey numbers in chordal graphs for $k\in \{1,2,3,4\}$ in Tables \ref{table:1-defectivechordal}, \ref{table:2-defectivechordal},  \ref{table:3-defectivechordal} and  \ref{table:4-defectivechordal} respectively. We run Algorithm \ref{algo:CHECK} at most 12 hours to obtain the values in Table  \ref{table:1-defectivechordal} whereas each one of the   values (up to 15) in the remaining tables are obtained in at most 5 hours. 

	\begin{theorem}
		The following hold where we denote the number of extremal graphs in parenthesis:
\end{theorem}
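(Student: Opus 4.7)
The plan is to invoke the computational procedure of Remark \ref{rem:algo}, specialized to chordal graphs via the simplification enabled by Lemma \ref{lem:chordal}. For each triple $(k,i,j)$ appearing in Tables \ref{table:1-defectivechordal}--\ref{table:4-defectivechordal}, I would start from $W=\mathcal T_1^{\mathcal{CH}}(k,i,j)=\{K_1\}$ and iteratively call Algorithm \ref{algo:CHECK}, but with two modifications: in line \ref{line:S}, the loop over $S\subseteq V(G)$ is restricted to subsets inducing a clique in $G$ (so that the newly added vertex is simplicial), and in line \ref{line:G} the chordality test is skipped. Each output $K$ is fed back as the next input and isomorphism-pruned. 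The first $n$ for which $\mathcal T_n^{\mathcal{CH}}(k,i,j)=\emptyset$ yields $R_k^{\mathcal{CH}}(i,j)=n$, and the cardinality of the last non-empty set $\mathcal T_{n-1}^{\mathcal{CH}}(k,i,j)$ gives the extremal count reported in parentheses for that entry.

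Correctness rests on two already-established ingredients. Remark \ref{rem:algo} guarantees that the unrestricted Algorithm \ref{algo:CHECK}, applied recursively from $\{K_1\}$, enumerates every sub-extremal and ultimately every extremal graph in the target class. Lemma \ref{lem:chordal} then shows that when the target class is $\mathcal{CH}$ it suffices to consider clique-subsets as neighborhoods of the new vertex, and that every graph so produced is automatically chordal, so neither the restriction nor the omitted chordality test can cause a missed extremal graph or a false positive. Consequently each table entry is the unambiguous output of a provably complete enumeration, and verification amounts to running the procedure and inspecting the final non-empty level.

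The main obstacle is computational rather than mathematical. At each step, for every graph $G$ currently held, the algorithm must (i) enumerate all clique-subsets of $V(G)$, (ii) for each resulting candidate $G_S$, scan all $(i{-}1)$- and $(j{-}1)$-subsets of $V(G)$ together with the new vertex to rule out a forbidden $k$-defective set, and (iii) isomorphism-filter the surviving candidates. Empirically the isomorphism filter dominates, which is why the authors cap execution at roughly 12 hours for $k=1$ and stop reporting values exceeding 15 for $k\in\{2,3,4\}$. In principle the tables can be extended by allowing longer runs, but proving larger entries requires no new idea beyond this combination of Remark \ref{rem:algo} and Lemma \ref{lem:chordal}; all table entries are then read off directly from the corresponding runs, whose extremal graphs are archived in the repository \cite{github}.
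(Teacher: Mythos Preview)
Your proposal is correct and matches the paper's approach essentially verbatim: the paper likewise invokes Remark~\ref{rem:algo} together with Lemma~\ref{lem:chordal}, restricting the loop in line~\ref{line:S} to clique subsets and omitting the chordality test in line~\ref{line:G}, then reads off each table entry and extremal count from the recursive run of Algorithm~\ref{algo:CHECK}. There is no additional argument in the paper beyond what you have described.
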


	\begin{table}[H]\centering
			\scalebox{1}{
				\begin{tabular}{|C|C|C|C|C|C|C|C|C|C|C|C|}
					\hline
					\textbf{$R_1^{\mathcal{CH}}(i,j)$} & \textbf{3} & \textbf{4} & \textbf{5} & \textbf{6} & \textbf{7} & \textbf{8} & \textbf{9} & \textbf{10} & \textbf{11} \\
					\hline
					
					\textbf{3} & 3(2) & 4(2) & 5(3) & 6(3) & 7(4) & 8(4) & 9(5) & 10(5) &11(6)\\
					\hline    
					
					\textbf{4} & 4(2) & 6(1) & 8(1) & 10(1) & 12(1) & 14(1) & 16(1) & 18(1) & 20(1) \\
					\hline
					
					\textbf{5} & 5(2) & 7(4) & 10(4) & 12(44) & 15(18) &  & &&\\
					\hline
					
					\textbf{6} & 6(2) & 8(8) & 12(17) & 14(1397) & & &&&\\
					\hline
					
					\textbf{7} & 7(2) & 10(1) & 14 (68) & & & &&&\\
					\hline
					
					\textbf{8} & 8(2) & 11(2) & 16(293) & & & &&&\\
					\hline
					
					\textbf{9} & 9(2) & 12(4) & 18(1245) & & & & &&\\
					\hline
					
				\end{tabular}
			}
			
			\caption{1-Defective Ramsey Numbers in Chordal Graphs}
			\label{table:1-defectivechordal}
			
	\end{table}
	\begin{table}[H]

			\centering
			\scalebox{1}{
				\begin{tabular}{|C|C|C|C|C|C|C|C|C|C|C|C|}
					\hline
					\textbf{$R_2^{\mathcal{CH}}(i,j)$} & \textbf{4} & \textbf{5} & \textbf{6} & \textbf{7} & \textbf{8} & \textbf{9} & \textbf{10} \\
					\hline
					
					\textbf{4} & 4(4) & 5(4) & 6(4) & 7(4) & 8(4) & 9(4) &10(4) \\
					\hline
					
					\textbf{5} & 5(4) & 7(2) & 8(11) & 9(101) & 11(66) & 13(24)&\\
					\hline
					
					\textbf{6} & 6(4) & 8(8) & 10(2) & 12(45) & 14(92) & &\\
					\hline
					
					\textbf{7} & 7(4) & 9(22) & 11(50) & 14(316) & & &\\
					\hline
					
					\textbf{8} & 8(4) & 11(1) & 12(469) & & & &\\
					\hline
					
					\textbf{9} & 9(4) & 12(4) & 14(13) & & & & \\
					\hline
					
					\textbf{10} & 10(4) & 13(11) & 15(194) &  & & &\\
					\hline

				\end{tabular}
			}
			
			\caption{2-Defective Ramsey Numbers in Chordal Graphs}
			\label{table:2-defectivechordal}
		
	\end{table}
	\begin{table}[H]
			\centering
			\scalebox{1}{
				\begin{tabular}{|C|C|C|C|C|C|C|C|C|C|C|}
					\hline
					\textbf{$R_3^{\mathcal{CH}}(i,j)$} & \textbf{5} & \textbf{6} & \textbf{7} & \textbf{8} & \textbf{9} & \textbf{10} \\
					\hline

					\textbf{5} & 5(10) & 6(10) & 7(11) & 8(11) & 9(12) & 10(12)\\
					\hline
					
					\textbf{6} & 6(10) & 8(4) & 9(24) & 10(123) & 12(2) & 13(43) \\
					\hline
					
					\textbf{7} & 7(10) & 9(19) & 11(4) & 12(151) & 14(2) & \\
					\hline
					
					\textbf{8} & 8(10) & 10(62) & 12(124) & 14(7) & &\\
					\hline
					
					\textbf{9} & 9(10) & 12(2) & 13(1846) & & & \\
					\hline

				\end{tabular}
			}
			
			\caption{3-Defective Ramsey Numbers in Chordal Graphs}
			\label{table:3-defectivechordal}

	\end{table}
	\begin{table}[H]
			\centering
			\scalebox{1}{
				\begin{tabular}{|C|C|C|C|C|C|C|C|C|C|C|}
					\hline
					\textbf{$R_4^{\mathcal{CH}}(i,j)$} & \textbf{6} & \textbf{7} & \textbf{8} & \textbf{9} & \textbf{10} & \textbf{11} \\
					\hline

					\textbf{6} & 6(27) & 7(27) & 8(27) & 9(27) & 10(27) & 11(27)\\
					\hline
					
					\textbf{7} & 7(27) & 9(10) & 10(64) & 11(360) & 13(4) & 14(24)\\
					\hline
					
					\textbf{8} & 8(27) & 10(53) & 12(8) & 13(364) & &\\
					\hline
					
					\textbf{9} & 9(27) & 11(207) & 13(322) & & &\\
					\hline
					
					\textbf{10} & 10(27) & 13(4) &  &  & &\\
					\hline

				\end{tabular}
			}
			
			\caption{4-Defective Ramsey Numbers in Chordal Graphs}
			\label{table:4-defectivechordal}
		\end{table}

In what follows, we prove that the pattern that we observe for $R_k^{\mathcal{CH}}(k+2,j)$, $R_k^{\mathcal{CH}}(i,k+2)$ and  $R_1^{\mathcal{CH}}(4,j)$ in Tables \ref{table:1-defectivechordal}, \ref{table:2-defectivechordal}, \ref{table:3-defectivechordal}, \ref{table:4-defectivechordal} holds for all $i$ and $j$. Moreover, we describe the unique extremal graph for $R_1^{\mathcal{CH}}(4,j)$ for $4\leq j \leq 11$. 

\begin{lemma}\label{lemmaempty} \cite{defectiveramseynumbers}
Let $\mathcal{G}$ be a graph class containing all empty graphs. Then,
$$R_{k}^{\mathcal{G}}(k+2, j)=j \text { for all } j \geq k+2.$$ 
\end{lemma}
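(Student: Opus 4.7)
The plan is to establish $R_{k}^{\mathcal{G}}(k+2, j)=j$ by separately proving the upper bound $R_{k}^{\mathcal{G}}(k+2, j) \leq j$ (every $n$-vertex graph in $\mathcal{G}$ with $n\geq j$ has either a $k$-dense $(k+2)$-set or a $k$-sparse $j$-set) and the lower bound $R_{k}^{\mathcal{G}}(k+2, j) \geq j$ (there is an extremal graph in $\mathcal{G}$ on $j-1$ vertices). The hypothesis that $\mathcal{G}$ contains all empty graphs will be used only for the latter.

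For the lower bound I would exhibit the empty graph $\overline{K_{j-1}}$, which lies in $\mathcal{G}$ by assumption. It trivially contains no $k$-sparse $j$-set since it has only $j-1$ vertices. To rule out a $k$-dense $(k+2)$-set, I would note that in $\overline{K_{j-1}}$ any vertex $v$ in a candidate $(k+2)$-set $D$ has empty neighborhood, so it misses all $k+1$ other vertices of $D$; since $k+1 > k$, no such $D$ is $k$-dense.

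For the upper bound, let $G\in\mathcal{G}$ have exactly $j$ vertices. If $V(G)$ itself is $k$-sparse we are done, so I assume otherwise: there exists $v\in V(G)$ with more than $k$ neighbors, i.e.\ at least $k+1$ neighbors in $V(G)$. I would pick any $k+1$ such neighbors $v_1,\dots,v_{k+1}$ and set $D=\{v,v_1,\dots,v_{k+1}\}$, a set of size $k+2$. Then $v$ is adjacent to every other vertex of $D$ (missing $0\leq k$ vertices), while each $v_i$ is adjacent to $v$ and therefore misses at most the remaining $k$ vertices $\{v_j: j\neq i\}$ of $D$. Hence $D$ is $k$-dense, as required.

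No serious obstacle is expected: the argument is a direct application of the definitions, the key observation being that a single vertex of degree at least $k+1$ automatically produces a $k$-dense $(k+2)$-set regardless of the adjacencies among its chosen neighbors. Both directions of the equality are therefore settled by elementary counting, with the assumption on $\mathcal{G}$ entering only to guarantee that the empty witness graph belongs to the class.
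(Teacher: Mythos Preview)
Your argument is correct: the lower bound via the empty graph $\overline{K_{j-1}}$ and the upper bound via the observation that any vertex of degree at least $k+1$ together with $k+1$ of its neighbors forms a $k$-dense $(k+2)$-set are exactly the natural steps, and every detail checks out. Note that the paper does not actually prove this lemma itself but merely quotes it from \cite{defectiveramseynumbers}; your proof is the standard elementary one and is presumably what appears in that reference.
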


Since an empty graph is chordal, Lemma \ref{lemmaempty} implies the following:
\begin{remark}
 $R_k^{\mathcal{CH}}(k+2,j)=j$ for all $k\geq 1$ and $j\geq k+2$. 
\end{remark}

By taking the complementary class, the following is implied by Lemma \ref{lemmaempty}. 

\begin{corollary}\label{lemmacomplete} 
Let $\mathcal{G}$ be a graph class containing all complete graphs. Then,
$$R_{k}^{\mathcal{G}}(i, k+2)=i \text { for all } i \geq k+2.$$
\end{corollary}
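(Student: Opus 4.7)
The plan is to derive this corollary from Lemma \ref{lemmaempty} via a standard complementation argument. The key observation is the self-dual nature of $k$-defective sets with respect to graph complementation: a subset $S$ of $V(G)$ is a $k$-dense $i$-set in $G$ if and only if $S$ is a $k$-sparse $i$-set in $\bar{G}$, and symmetrically for $k$-sparse sets. Denoting the complementary class by $\mathcal{G}^c = \{\bar{G} : G \in \mathcal{G}\}$, this gives the identity $R_k^{\mathcal{G}}(i,j) = R_k^{\mathcal{G}^c}(j,i)$ for any graph class $\mathcal{G}$ and all relevant parameters.

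First I would record the complementation identity above. This is essentially a one-line check from the definitions: $G \in \mathcal{G}$ of order $n$ is extremal (or sub-extremal) for $R_k^{\mathcal{G}}(i,j)$ if and only if $\bar{G} \in \mathcal{G}^c$ has neither a $k$-sparse $i$-set nor a $k$-dense $j$-set, i.e., $\bar{G}$ is extremal (or sub-extremal) for $R_k^{\mathcal{G}^c}(j,i)$. Taking the minimum $n$ forcing such a forbidden set yields the claimed equality.

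Next, since the complement of $K_n$ is the empty graph on $n$ vertices, the hypothesis that $\mathcal{G}$ contains all complete graphs immediately implies that $\mathcal{G}^c$ contains all empty graphs. Thus Lemma \ref{lemmaempty} applies to $\mathcal{G}^c$ with the roles of the parameters suitably chosen: setting the first parameter to $k+2$ and the second to $i$ (which is legitimate because $i \geq k+2$ by hypothesis), we obtain $R_k^{\mathcal{G}^c}(k+2, i) = i$.

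Combining the two ingredients, the complementation identity with $j = k+2$ gives $R_k^{\mathcal{G}}(i, k+2) = R_k^{\mathcal{G}^c}(k+2, i) = i$, which is the desired conclusion. There is no real obstacle here; the entire argument is a clean transfer of information through the complement map, and the only thing requiring care is a correct statement and verification of the complementation identity. As a sanity check, the lower bound can also be verified directly by exhibiting $K_{i-1} \in \mathcal{G}$: it has no $k$-dense $i$-set (too few vertices) and no $k$-sparse $(k+2)$-set, since any $k+2$ vertices of a clique form a set in which every vertex has $k+1$ neighbors.
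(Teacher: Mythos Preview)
Your argument is correct and follows essentially the same approach as the paper, which simply states that the result follows from Lemma \ref{lemmaempty} by passing to the complementary class. Your write-up is more detailed in spelling out the complementation identity $R_k^{\mathcal{G}}(i,j) = R_k^{\mathcal{G}^c}(j,i)$ and the sanity-check lower bound via $K_{i-1}$, but the core idea is identical.
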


Since a complete graph is chordal, Lemma \ref{lemmacomplete} implies the following:
\begin{remark}
 $R_k^{\mathcal{CH}}(i,k+2)=i$ for all $k\geq 1$ and $i\geq k+2$. 
\end{remark}

\begin{theorem}
$R_1^{\mathcal{CH}}(4,j)=2j-2$ for all $j\geq 3$. 
\end{theorem}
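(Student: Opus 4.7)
The plan is to prove both inequalities $R_1^{\mathcal{CH}}(4,j) \leq 2j-2$ and $R_1^{\mathcal{CH}}(4,j) \geq 2j-2$. The case $j = 3$ reduces to $R_1^{\mathcal{CH}}(4,3) = 4$, which is the instance $k=1$, $i=4$ of the preceding remark on $R_k^{\mathcal{CH}}(i,k+2)$, so from here on I assume $j \geq 4$.

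For the lower bound, I would exhibit a chordal graph $H_j$ on $2j-3$ vertices that contains neither a 1-dense 4-set nor a 1-sparse $j$-set. Guided by the small-$j$ entries of Table \ref{table:1-defectivechordal} --- the unique extremal graph for $R_1^{\mathcal{CH}}(4,4)=6$ is the bull, i.e.\ a triangle with two pendant leaves at distinct vertices --- the natural candidate is a ``chain of triangles with two pendant leaves'': take $j-3$ triangles glued pairwise by single cut vertices into a linear chain, and attach one pendant edge to a non-shared vertex of each of the two end triangles. This graph has $3+2(j-4)+2=2j-3$ vertices, and each of its blocks is a $K_2$ or a $K_3$, so $H_j$ is a triangle cactus, hence chordal and both $K_4$-free and diamond-free. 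Since the only 1-dense 4-sets in a chordal graph induce $K_4$ or the diamond $K_4-e$, $H_j$ has no 1-dense 4-set. To show it has no 1-sparse $j$-set, I would argue block by block: any $S \subseteq V(H_j)$ inducing a graph of maximum degree at most one takes at most two vertices from each triangle, and if it takes two then it cannot pick any further neighbour of those vertices in adjacent blocks (in particular no shared cut vertex). Aggregating these constraints along the chain yields $|S| \leq j-1$.

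For the upper bound, let $G$ be a chordal graph on $2j-2$ vertices and assume $G$ has no 1-dense 4-set. A 1-dense 4-set induces a 4-vertex graph of minimum degree at least two, i.e.\ $C_4$, $K_4-e$, or $K_4$; since chordality excludes $C_4$, $G$ is $K_4$-free and diamond-free. Every block of a chordal graph is a clique, and a $K_4$-free clique has at most three vertices, so every block of $G$ is a $K_1$, $K_2$, or $K_3$. Hence $G$ is a cactus graph whose every cycle is a triangle. I would then invoke the cactus-graph result of \cite{defectiveramseynumbers}, which guarantees that every cactus graph on $2j-2$ vertices contains a 1-dense 4-set or a 1-sparse $j$-set. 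Since $G$ has no 1-dense 4-set, it must contain a 1-sparse $j$-set, completing the upper bound.

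The main obstacle I anticipate is the no-1-sparse-$j$-set verification for $H_j$: because $H_j$ is assembled from many triangles sharing single vertices and augmented by two pendant leaves, one has to systematically rule out every way of selecting up to two vertices per triangle --- together with the pendant leaves --- so as to produce an induced graph of maximum degree at most one. The upper bound, in contrast, is essentially formal once the block-clique property of chordal graphs has been combined with the cactus-graph result of \cite{defectiveramseynumbers}.
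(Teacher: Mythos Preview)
Your approach matches the paper's: reduce the upper bound to the cactus result $R_1^{\mathcal{CA}}(4,j)=2j-2$ from \cite{defectiveramseynumbers}, and use the same triangle-chain-with-two-pendants as the extremal graph (the paper in fact imports that graph and its properties directly from \cite{defectiveramseynumbers} rather than re-verifying that it has no 1-sparse $j$-set).

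One misstatement needs fixing. You write ``Every block of a chordal graph is a clique,'' which is false --- the diamond $K_4-e$ is itself a 2-connected chordal non-clique. The correct fact is that a graph is a \emph{block graph} (every block a clique) if and only if it is chordal and diamond-free; since you have already established that $G$ is diamond-free, invoking this characterization repairs the argument with no further change. Alternatively, you can argue as the paper does: any four vertices spanning a $C_4$ (induced or not) already form a 1-dense 4-set, so $G$ contains no $C_4$ even as a subgraph; combined with chordality this forces every 2-connected component to be an edge or a triangle, hence $G$ is a cactus.
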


\begin{proof}
Let $G$ be a chordal graph on at least $2j-2$ vertices. If $G$ has a 1-dense 4-set, we are done. If not, $G$ does not contain a cycle of size 4 (as a partial subgraph) since it is 1-dense 4-set. Then, all 2-connected components of $G$ are either triangles or edges; thus, it is a \textit{cactus} graph as any two cycles of it have no edge in common. Let $\mathcal{CA}$ denote the class of cactus graphs. From  \cite{defectiveramseynumbers}, we know that $R_1^{\mathcal{CA}}(4,j)=2j-2$ for all $j\geq3$. It follows that $G$ contains a 1-sparse $j$-set, and we are done for all $j\geq4$. We conclude the proof with the extremal graph (see Figure \ref{fig:extremalforChordal}) that has been constructed for cactus graphs in \cite{defectiveramseynumbers}. This is a graph on $2j-3$ vertices with neither 1-dense 4-set nor 1-sparse $j$-set for all $j\geq3$. 
\end{proof}

\def\r{4pt}
\def\dy{1cm}
\tikzset{c/.style={draw,circle,fill=white,minimum size=\r,inner sep=0pt,
		anchor=center},
	d/.style={draw,circle,fill=black,minimum size=\r,inner sep=0pt, anchor=center}}

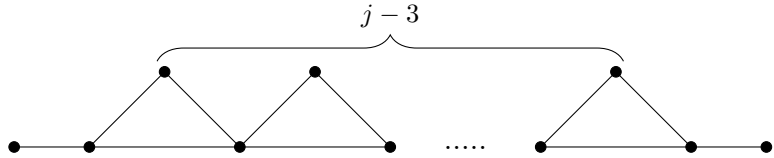
\begin{figure}[h]
	\begin{center}
		
		\begin{tikzpicture}

		\node[d] at (0,2) {};
		\node[d] at (1,1) {};
		\node[d] at (-1,1) {};
		\draw (0,2) to (1,1)
		(0,2) to (-1,1)
		(-1,1) to (1,1);
		
		\node[d] at (2,2) {};
		\node[d] at (3,1) {};
		\draw (1,1) to (2,2)
		(1,1) to (3,1)
		(3,1) to (2,2);
		
		\node[thick,minimum size=3cm] at (4,1) {.....};
		
		\node[d] at (6,2) {};
		\node[d] at (5,1) {};
		\node[d] at (7,1) {};
		\draw (5,1) to (6,2)
		(7,1) to (6,2)
		(7,1) to (5,1);
		
		\node[d] at (8,1) {};
		\draw (7,1) to (8,1);
		
		\node[d] at (-2,1) {};
		\draw (-1,1) to (-2,1);
		
		\draw [decorate,decoration={brace,amplitude=10pt},xshift=0pt,yshift=4pt]
		(-0.1,2) -- (6.1,2) node [black,midway,yshift=0.6cm] 
		{\footnotesize $j-3$};

		\end{tikzpicture}
		\vspace{-1.5cm}
	\end{center}
	\caption{An extremal graph for $R_1^{\mathcal{CH}}(4,j)=2j-2$ for $j\geq 3$ with $2j-3$ vertices.}\label{fig:extremalforChordal}
\end{figure}

According to Table \ref{table:1-defectivechordal} obtained by Algorithm \ref{algo:CHECK}, the extremal graph described in Figure \ref{fig:extremalforChordal} is the unique extremal graph for  $R_1^{\mathcal{CH}}(4,j)=2j-2$ for all $4\leq j\leq 11$. However, we do not know if this trend continues for all $j \geq 12$.

	\section{Defective Cocolorings} \label{sec:coco}
	
	\noindent  In this section, we study the parameter $c_k^{\mathcal{G}}(m)$ for perfect graphs and cographs. Recall that  $c_k^{\mathcal{G}}(m)$ is the maximum integer $n$ such that for every graph $G$ on $n$ vertices in the graph class $\mathcal{G}$, the vertices of $G$ can be partitioned into $m$ subsets where each subset is $k$-defective (either $k$-dense or $k$-sparse). In other words, every graph in a class $\mathcal G$ on at most $c_k^{\mathcal{G}}(m)$ vertices can be $k$-defectively cocolored with at most $m$ colors. In this section, an \textit{extremal graph for $c_k^{\mathcal{G}}(m)=n$} is a graph of order $n+1$ that belongs to the class $\mathcal G$ and whose vertices can not be partitioned into at most $m$ sets each of which is $k$-defective.

We start with some general observations that will be useful when studying defective cocolorings in perfect graphs and in cographs.  Since general graphs contain all graph classes, the following is an immediate consequence : 
\begin{remark}
For any graph class $\mathcal G$ and all integers $k\geq 0$, $m\geq 1$, we have $c_k(m)\leq c^{\mathcal{G}}_k(m)$.
\end{remark}

This remark together with previously known results guide us through our research. From \cite{defectiveparameterTinazAhu}, we know $c_k(1)=k+2$, $c_1(2)=7$, $c_2(2)=10$, and $c_1(3)=12$. In the same paper, it is also conjectured that $c_k(2)=3k+4$. 
We first generalize the formula $c_k(1)=k+2$ to graph classes.
	
\begin{lemma}\label{lem:c_k_1_G}
For all integers $k\geq 0$ and all graph classes $\mathcal G$ containing $K_{1,k+2}$, we have $c_k^{\mathcal{G}}(1)=k+2$.
\end{lemma}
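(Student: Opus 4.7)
The plan is to prove the equality by establishing the two inequalities $c_k^{\mathcal{G}}(1) \geq k+2$ and $c_k^{\mathcal{G}}(1) \leq k+2$ separately.

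For the upper bound $c_k^{\mathcal{G}}(1) \leq k+2$, I would exhibit a graph of order $k+3$ in $\mathcal{G}$ whose vertex set admits no $k$-defective $1$-cocoloring, i.e., whose full vertex set is neither $k$-sparse nor $k$-dense. The natural candidate is the assumed-present $K_{1,k+2}$. Its center has degree $k+2$, so the whole vertex set fails to be $k$-sparse (since a $k$-sparse set requires every vertex to have at most $k$ neighbors in the set). Each of the $k+2$ leaves has only one neighbor (the center) and therefore misses $k+1$ vertices of the set in its neighborhood, so the set fails to be $k$-dense. Hence $V(K_{1,k+2})$ is not $k$-defective, which yields $c_k^{\mathcal{G}}(1) \leq k+2$.

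For the lower bound $c_k^{\mathcal{G}}(1) \geq k+2$, I would show that \emph{every} graph $G$ in $\mathcal{G}$ on at most $k+2$ vertices has $V(G)$ itself $k$-defective. If $|V(G)| \leq k+1$, then every vertex has at most $k$ neighbors in $V(G)$, so $V(G)$ is trivially $k$-sparse. The only nontrivial case is $|V(G)| = k+2$. The key observation is the following dichotomy: $V(G)$ fails to be $k$-sparse precisely when some vertex has degree $k+1$ (a universal vertex in $G$), and $V(G)$ fails to be $k$-dense precisely when some vertex misses at least $k+1$ vertices in its neighborhood, which on $k+2$ vertices means having an isolated vertex. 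However, a universal vertex is adjacent to every other vertex of $G$ and therefore cannot coexist with an isolated vertex. Hence at least one of the two properties holds, so $V(G)$ is $k$-defective.

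Combining the two bounds gives $c_k^{\mathcal{G}}(1) = k+2$. The only genuinely nontrivial step is the observation that on $k+2$ vertices the obstructions to $k$-sparseness (presence of a universal vertex) and to $k$-density (presence of an isolated vertex) are mutually exclusive; everything else is a direct application of the definitions. No closure assumption on $\mathcal{G}$ beyond containing $K_{1,k+2}$ is needed, since the lower bound argument works for arbitrary graphs on at most $k+2$ vertices.
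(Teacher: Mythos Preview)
Your proof is correct and follows essentially the same approach as the paper: the paper's one-line proof simply states that $K_{1,k+2}$ is not $k$-defective and that every set of size $k+2$ is $k$-defective, and your argument spells out exactly these two observations in detail. Your universal-vertex/isolated-vertex dichotomy is the natural justification of the latter claim that the paper leaves implicit.
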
 
\begin{proof}
The result follows by observing that $K_{1,k+2}\in\mathcal{G}$ is not $k$-defective and each set of size $k+2$ is $k$-defective. 
\end{proof}

In \cite{defectiveparameterTinazAhu}, Straight's formula from \cite{StraightsFormula} has been generalized to the defective version as follows: $c_k(m-1)+t\leq c_k(m)\leq c_k(m-1)+(m+1)(k+1)$ where $t$ is any positive integer satisfying $c_k(m-1)+t\geq R_k(t,t)$. Next, we adapt the lower bound to graph classes.

\begin{lemma}\label{lem:Straightlowerbound}
If $c_k^{\mathcal{G}}(m-1)+t\geq R_k^{\mathcal{G}}(t,t)$ for some integers $k\geq 0$, $m\geq 2$, $t\geq 1$ and graph class $\mathcal{G}$, then we have $c_k^{\mathcal{G}}(m-1)+t\leq c_k^{\mathcal{G}}(m)$.
\end{lemma}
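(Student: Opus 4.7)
Set $n := c_k^{\mathcal{G}}(m-1)+t$. To prove the inequality $n \leq c_k^{\mathcal{G}}(m)$, it suffices (by the definition of $c_k^{\mathcal{G}}(m)$ as the maximum order for which every graph of that order in $\mathcal{G}$ admits a $k$-defective $m$-cocoloring) to show that every graph $G\in\mathcal{G}$ with $|V(G)|=n$ has such a cocoloring. The overall structure is a one-step ``peel'' argument: use the defective Ramsey hypothesis to extract a single $k$-defective color class, then invoke the definition of $c_k^{\mathcal{G}}(m-1)$ on the leftover graph.

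The first step is to exploit the hypothesis $n \geq R_k^{\mathcal{G}}(t,t)$. By the very definition of the defective Ramsey number $R_k^{\mathcal{G}}(t,t)$, any graph of $\mathcal{G}$ on at least $R_k^{\mathcal{G}}(t,t)$ vertices contains either a $k$-dense $t$-set or a $k$-sparse $t$-set; call such a set $S \subseteq V(G)$. In either case $S$ is, by definition, a $k$-defective subset of $V(G)$ of size exactly $t$.

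The second step is to remove $S$ and apply the definition of $c_k^{\mathcal{G}}(m-1)$ to what remains. The induced subgraph $G-S$ has $n-t = c_k^{\mathcal{G}}(m-1)$ vertices, and because the graph classes considered in this paper (perfect graphs, bipartite graphs, chordal graphs, cographs) are all hereditary, $G-S \in \mathcal{G}$. Hence by the definition of $c_k^{\mathcal{G}}(m-1)$, the graph $G-S$ admits a $k$-defective $(m-1)$-cocoloring, say into parts $V_1,\dots,V_{m-1}$. Appending $S$ as an additional color class yields the partition $\{S, V_1, \dots, V_{m-1}\}$ of $V(G)$ into $m$ sets, each of which is $k$-defective; this is the required $k$-defective $m$-cocoloring of $G$.

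The only delicate point is the implicit use of hereditariness of $\mathcal{G}$ when passing from $G$ to $G-S$, since without it the inductive hypothesis on $c_k^{\mathcal{G}}(m-1)$ cannot be invoked. For the classes studied in this paper, this closure under induced subgraphs is automatic, so the lemma follows cleanly; one should probably record this as a standing assumption on $\mathcal{G}$ either in the statement or at the start of the section.
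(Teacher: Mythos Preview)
Your proof is correct and follows essentially the same route as the paper's own argument: extract a $k$-defective $t$-set using the Ramsey hypothesis, then cocolor the remaining $c_k^{\mathcal{G}}(m-1)$ vertices with $m-1$ colors. Your explicit remark about needing $\mathcal{G}$ to be hereditary is a worthwhile addition, as the paper's proof silently assumes $G-T\in\mathcal{G}$ without comment.
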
	

\begin{proof}
Let us take a graph $G\in\mathcal{G}$ on $c_k^{\mathcal{G}}(m-1)+t$ vertices where $c_k^{\mathcal{G}}(m-1)+t\geq R_k^{\mathcal{G}}(t,t)$. By definition of $R_k^{\mathcal{G}}(t,t)$, $G$ has a $k$-defective $t$-set, say $T$. Now, $G-T$ has $c_k^{\mathcal{G}}(m-1)$ vertices, so it can be colored with $m-1$ colors where each color class is a $k$-defective set, which completes the proof.
\end{proof}

Secondly, we show that the upper bound of the above inequality can be adapted to graph classes with an additional property on the class. We use the same idea as in \cite{defectiveparameterTinazAhu} with a slight modification, which yields an improvement.

\begin{lemma}\label{lem:Straightupperbound}
	Let $\mathcal{G}$ be a graph class that is closed under taking the disjoint union with any clique. Then we have $c_k^{\mathcal{G}}(m)\leq c_k^{\mathcal{G}}(m-1)+m(k+1)+1$.
\end{lemma}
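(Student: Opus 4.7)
The plan is to establish the bound by exhibiting an explicit graph in $\mathcal{G}$ of order $c_k^{\mathcal{G}}(m-1) + m(k+1) + 2$ that admits no $k$-defective $m$-cocoloring. To this end, let $H$ be an extremal graph for $c_k^{\mathcal{G}}(m-1)$; that is, $H\in\mathcal{G}$ with $|V(H)| = c_k^{\mathcal{G}}(m-1)+1$ and $H$ has no $k$-defective $(m-1)$-cocoloring. By the closure hypothesis on $\mathcal{G}$, the disjoint union $G := H \sqcup K_{m(k+1)+1}$ lies in $\mathcal{G}$ and has the prescribed order, so it suffices to rule out a $k$-defective $m$-cocoloring of $G$.

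Suppose for contradiction that $G$ admits such a cocoloring $C_1,\dots,C_m$, and for each $i$ decompose $C_i = A_i \sqcup B_i$ with $A_i = C_i \cap V(H)$ and $B_i = C_i \cap V(K_{m(k+1)+1})$. Because no edge of $G$ joins $V(H)$ to $V(K_{m(k+1)+1})$, a vertex of $B_i$ is adjacent in $C_i$ to exactly the other $|B_i|-1$ vertices of $B_i$ and to no vertex of $A_i$; symmetrically, a vertex of $A_i$ misses every vertex of $B_i$ in $C_i$. A short case analysis on whether $C_i$ is $k$-sparse or $k$-dense then yields, whenever $A_i\neq\emptyset$, both the bound $|B_i|\leq k+1$ and the fact that $A_i$ is itself $k$-defective in $H$ (inheriting sparseness or denseness from $C_i$).

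Let $s$ denote the number of indices $i$ with $A_i\neq\emptyset$. The nonempty $A_i$'s then form a $k$-defective $s$-cocoloring of $H$, and since $H$ admits no such cocoloring with at most $m-1$ colors we must have $s=m$. Consequently every class satisfies $|B_i|\leq k+1$, so $\sum_{i=1}^{m}|B_i|\leq m(k+1)$, contradicting $\sum_{i=1}^{m}|B_i| = m(k+1)+1$. The main delicacy is the case analysis in the second step: a $k$-dense class with both $A_i$ and $B_i$ nonempty actually forces the even sharper bound $|B_i|\leq k$ (because each clique vertex of $C_i$ misses the entire set $A_i$), while a $k$-sparse class permits $|B_i|=k+1$; reconciling these into the uniform bound $|B_i|\leq k+1$ for every class with $A_i\neq\emptyset$, and verifying that $A_i$ inherits $k$-defectiveness, is precisely what powers the pigeonhole argument and delivers the sharpening from $(m+1)(k+1)$ to $m(k+1)+1$.
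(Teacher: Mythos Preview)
Your proposal is correct and follows essentially the same approach as the paper: both construct $G = H \sqcup K_{m(k+1)+1}$ from an extremal graph $H$ for $c_k^{\mathcal{G}}(m-1)$ and derive a contradiction from a hypothetical $k$-defective $m$-cocoloring. The only cosmetic difference is that the paper applies pigeonhole directly to find a single class containing at least $k+2$ clique vertices (hence $k$-dense and disjoint from $H$), whereas you argue contrapositively that every class must meet $H$ and then sum the bounds $|B_i|\leq k+1$; these are two phrasings of the same argument.
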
	

\begin{proof}
	Take a graph class $\mathcal{G}$ with desired property. Let us construct a graph lying in $\mathcal{G}$ and having $c_k^{\mathcal{G}}(m-1)+m(k+1)+2$ vertices which cannot be $k$-defectively cocolored  using at most $m$ colors. By definition of $c_k^{\mathcal{G}}(m-1)$, there is a graph $H\in \mathcal G$ on $1+c_k^{\mathcal{G}}(m-1)$ vertices that cannot be partitioned into $m-1$ many $k$-defective sets. Consider the disjoint union of $K_{1+m(k+1)}$ and $H$, say $G$. Note that $G\in\mathcal{G}$ by definition of $G$, and it has $c_k^{\mathcal{G}}(m-1)+m(k+1)+2$ vertices. Assume $G$ can be partitioned into $m$ subsets such that each subset is $k$-defective. By the pigeonhole principle, there is a subset $S$ for which $|S\cap K_{1+m(k+1)}|\geq k+2$. Since $S$ is a $k$-defective set and each vertex in $S\cap K_{1+m(k+1)}$ has at least $k+1$ neighbors in $S$, it follows that $S$ is not $k$-sparse. So, $S$ is $k$-dense; therefore it contains no vertex from $H$ or else it would miss at least $k+2$ vertices of $S \cap K_{1+m(k+1)}$. As a result, $G-S$ contains $H$ and can to be partitioned into $m-1$ many $k$-defective sets, which is a contradiction since $H$ itself already requires at least $m$ colors.
\end{proof}

Now, we are ready to discuss the defective cocolorings in graph classes.

\subsection{Perfect Graphs}\label{sec:cocoperfect}
	
Firstly, we establish the formula for the classical cocoloring in perfect graphs where the defectiveness level is zero. 	
	\begin{theorem}\label{thm:c_0_m_PG}
		For all integers $m\geq 1$, we have $c_0^{\mathcal{PG}}(m)=\dfrac{m(m+3)}{2}$.
	\end{theorem}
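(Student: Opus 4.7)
The plan is to prove the equality by establishing the two matching bounds separately, using perfectness on both sides of the argument.

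For the upper bound $c_0^{\mathcal{PG}}(m) \leq \frac{m(m+3)}{2}$, I would simply apply Lemma \ref{lem:Straightupperbound} recursively. Perfect graphs are closed under disjoint union with cliques, so with $k=0$ the lemma yields the recurrence $c_0^{\mathcal{PG}}(m) \leq c_0^{\mathcal{PG}}(m-1) + m + 1$. Using the base value $c_0^{\mathcal{PG}}(1) = 2$ supplied by Lemma \ref{lem:c_k_1_G}, telescoping gives $c_0^{\mathcal{PG}}(m) \leq 2 + 3 + \cdots + (m+1) = \frac{m(m+3)}{2}$. An explicit extremal witness, which I would exhibit to make the bound concrete, is the perfect graph $H_m := K_1 \cup K_2 \cup \cdots \cup K_{m+1}$ on $\frac{(m+1)(m+2)}{2}$ vertices. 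To show $H_m$ resists $m$-cocoloring, observe that every color class is either a clique contained in a single $K_i$ or an independent set selecting at most one vertex per $K_i$; if $b$ denotes the number of independent-set classes, then for each index $i > b$ the clique $K_i$ cannot be fully absorbed by the $b$ independent-set classes and hence contributes at least one clique class, giving a total of at least $(m+1-b) + b = m+1$ classes.

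For the lower bound $c_0^{\mathcal{PG}}(m) \geq \frac{m(m+3)}{2}$, I would induct on $m$, with the base $m=1$ handled by Lemma \ref{lem:c_k_1_G}. For the inductive step, let $G$ be an arbitrary perfect graph on $n = \frac{m(m+3)}{2}$ vertices and split into three cases. If $\omega(G) \leq m$, perfectness gives $\chi(G) = \omega(G) \leq m$, so a proper coloring provides an $m$-cocoloring into independent sets. If $\alpha(G) \leq m$, the same argument applied to $\overline{G}$ (which is perfect) gives a partition into at most $m$ cliques of $G$. Otherwise, $\omega(G) \geq m+1$ and $\alpha(G) \geq m+1$; pick any clique $K \subseteq V(G)$ of size exactly $m+1$. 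Then $G - K$ is a perfect graph on $n - (m+1) = \frac{(m-1)(m+2)}{2}$ vertices, which by the induction hypothesis is at most $c_0^{\mathcal{PG}}(m-1)$, so $G - K$ admits an $(m-1)$-cocoloring. Appending $K$ as the $m$-th color class yields an $m$-cocoloring of $G$.

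The main obstacle is recognizing that the generic lower bound from Lemma \ref{lem:Straightlowerbound} is insufficient here: one would need $c_0^{\mathcal{PG}}(m-1) + t \geq R_0^{\mathcal{PG}}(t,t) = (t-1)^2 + 1$ for $t = m+1$, but $\frac{(m-1)(m+2)}{2} + (m+1) = \frac{m(m+3)}{2}$ falls short of $m^2 + 1$ whenever $m \geq 3$. The key insight that circumvents this is that perfectness provides not only a Ramsey-type inequality but also the identities $\chi = \omega$ and $\overline{\chi} = \alpha$, which make the ``small $\omega$'' and ``small $\alpha$'' cases immediate; the remaining case $\omega, \alpha \geq m+1$ is exactly where the clique-removal step lands the residual graph on precisely $c_0^{\mathcal{PG}}(m-1)$ vertices. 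Apart from this, the argument should be short and routine.
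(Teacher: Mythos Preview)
Your proof is correct and follows essentially the same approach as the paper: the same extremal graph $K_1\cup K_2\cup\cdots\cup K_{m+1}$ for the upper bound, and the same induction removing a clique of size $m+1$ when $\chi(G)>m$ for the lower bound. Your additional $\alpha(G)\le m$ case and the appeal to Lemma~\ref{lem:Straightupperbound} are redundant but harmless, and the paper states the non-cocolorability of the extremal graph without the detailed counting argument you supply.
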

	
	\begin{proof}
		Firstly, consider the disjoint union of $K_i$ where $i\in\{1,2,...,m+1\}$. It is a perfect graph on $1+2+...+(m+1)=\dfrac{(m+1)(m+2)}{2}=\dfrac{m(m+3)}{2}+1$ vertices. It can be easily seen that this graph cannot be partitioned into $m$ cliques or an independent sets. Secondly, let us show that every perfect graph on $\dfrac{m(m+3)}{2}$ vertices can be colored with $m$ colors such that each color class is either a clique or an independent set. We prove this by induction on $m$. The claim is trivial for $m=1$. Assume it holds for smaller values of $m$, and take a perfect graph $G$ on $\dfrac{m(m+3)}{2}$ vertices. If $\chi(G)\leq m$, then we are done. Assume $\chi(G)\geq m+1$, we get $\omega(G)\geq m+1$ since $G$ is perfect. Let us color a maximum clique with color $m$. Since the number of remaining vertices is at most $\dfrac{m(m+3)}{2}-(m+1)=\dfrac{(m-1)(m+2)}{2}$, they can be cocolored with $m-1$ colors by the induction hypothesis, hence we are done.
	\end{proof}
	
We can also note that we have $c_k^{\mathcal{PG}}(1)=k+2$ by Lemma \ref{lem:c_k_1_G} since all star graphs are perfect. We proceed with the computation of our parameter for non-trivial cases. We use a combination of theoretical analysis and computer assisted approach.
	
	\begin{theorem}\label{thm:c_1_2_PG}
		We have $c_1^{\mathcal{PG}}(2)=7$ with 24 extremal graphs.
	\end{theorem}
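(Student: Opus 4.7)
The plan is to combine a direct lower bound argument with a computer-assisted enumeration for the upper bound and for counting the extremal graphs.

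For the lower bound, I would invoke the known value $c_1(2)=7$ from \cite{defectiveparameterTinazAhu}, cited at the start of Section \ref{sec:coco}, together with the Remark stated just before Lemma \ref{lem:c_k_1_G}, which asserts $c_k(m) \leq c_k^{\mathcal{G}}(m)$ for every graph class $\mathcal{G}$. Instantiating with $k=1$, $m=2$, and $\mathcal{G}=\mathcal{PG}$ yields $c_1^{\mathcal{PG}}(2)\geq 7$ with essentially no work.

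For the upper bound together with the precise count of extremal graphs, I would run an enumeration in the spirit of Algorithm \ref{algo:CHECK} but adapted so that the termination test is a cocoloring test instead of a forbidden $k$-defective set test. Concretely, I would generate all non-isomorphic perfect graphs on $7$ vertices (each of which admits a 1-defective 2-cocoloring by the lower bound), then for every such graph $G$ and every $S\subseteq V(G)$ build $G_S$ by adding a new vertex whose neighborhood is $S$, apply the SPGT-based perfect-graph recognition routine described in Section \ref{sec:perfect} to discard non-perfect extensions, and finally test each surviving $G_S$ for a 1-defective 2-cocoloring. The latter test amounts to enumerating the $2^7$ unordered bipartitions of $V(G_S)$ and checking, for each part $A$, whether either every vertex of $A$ has at most one neighbor in $A$, or every vertex of $A$ has at most one non-neighbor in $A$. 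I would collect the 8-vertex perfect graphs for which every bipartition fails this test, apply the isomorphism-reduction routine borrowed from \cite{defectiveRamseyJohnChappell}, and expect the resulting list to contain exactly $24$ graphs, which simultaneously yields $c_1^{\mathcal{PG}}(2)\leq 7$ and the claimed count.

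The main obstacle — in contrast with the defective Ramsey results of the preceding sections — is that ``failing to admit a 1-defective 2-cocoloring'' is \emph{not} a hereditary property: deleting a vertex can turn a non-cocolorable graph into a cocolorable one. Consequently, I cannot incrementally prune the search by growing from previously found extremal graphs, as Algorithm \ref{algo:CHECK} does for Ramsey numbers; I must genuinely consider every perfect graph on $8$ vertices (obtainable, as noted, by one-vertex extensions of the 7-vertex perfect graphs, since perfectness is hereditary). Fortunately, $n=8$ lies comfortably within the range in which McKay's enumeration and our extension-plus-recognition approach remain tractable, and the per-graph cocoloring test is dominated by a mere $2^8$ bipartition checks, so the computation is expected to terminate quickly and confirm both the value $7$ and the $24$ extremal graphs.
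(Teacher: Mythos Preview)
Your proposal is correct and would yield both the value and the count, but it takes a more brute-force route than the paper. The paper's proof shares your lower bound, yet for the upper bound it exploits a key shortcut you do not use: if a perfect graph on $8$ vertices contains a $1$-defective $5$-set $S$, then $S$ together with the remaining three vertices (any $3$-set is $1$-defective) already gives a $1$-defective $2$-cocoloring. Hence every extremal graph must lie in $\mathcal{T}_8^{\mathcal{PG}}(1,5,5)$, which the paper generates directly with Algorithm~\ref{algo:CHECK} (obtaining $824$ graphs), and for those graphs only $4$--$4$ partitions need to be tested. This buys two things: Algorithm~\ref{algo:CHECK} can be reused verbatim without the cocoloring adaptation you describe, and the search space shrinks from all $8$-vertex perfect graphs with all $128$ bipartitions each to $824$ graphs with $\binom{8}{4}/2=35$ partitions each. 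Your approach, by contrast, is conceptually simpler---no auxiliary Ramsey-type observation is needed---and still comfortably tractable at $n=8$; it just does more work than necessary.
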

	
	\begin{proof}
		We note that $c_1^{\mathcal{PG}}(2)\geq c_1(2)=7$. Now, let us show that there are $24$ extremal graphs; that is perfect graphs on 8 vertices that can not be 1-defectively cocolored using at most 2 colors. Take a perfect graph on 8 vertices. If it has a 1-defective set of size 5, then two colors are enough since any set of three vertices is 1-defective. So any extremal graph on 8 vertices is free of 1-defective 5-sets, in other words, it belongs to the set  $\mathcal{T}_{8}^\mathcal{PG}(1,5,5)$. Accordingly, we generated the set $\mathcal{T}_{8}^\mathcal{PG}(1,5,5)$ using Algorithm \ref{algo:CHECK} starting with the single vertex graph as input. For each one of the resulting 824 graphs in $\mathcal{T}_{8}^\mathcal{PG}(1,5,5)$, we checked all possible partitions into two sets of four vertices each, and found that exactly 24 many of them cannot be partitioned into two 1-defective sets. The list of these 24 extremal graphs for $c_1^{\mathcal{PG}}(2)=7$ can be found in our github account \cite{github}.
	\end{proof}

Even though we could not find the number of extremal graphs, we get the exact value for $k=m=2$. 	
	
	\begin{theorem}\label{thm:c_2_2_PG}
		We have $c_2^{\mathcal{PG}}(2)=11$.
	\end{theorem}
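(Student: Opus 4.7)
The plan is to establish both bounds separately, following the template of Theorem \ref{thm:c_1_2_PG}. For the upper bound $c_2^{\mathcal{PG}}(2) \leq 11$, I would invoke Lemma \ref{lem:Straightupperbound} directly. Since the disjoint union of perfect graphs is perfect and every complete graph is perfect, the class $\mathcal{PG}$ is closed under disjoint union with a clique. Combined with $c_2^{\mathcal{PG}}(1) = 4$ from Lemma \ref{lem:c_k_1_G}, Lemma \ref{lem:Straightupperbound} yields
\[
c_2^{\mathcal{PG}}(2) \;\leq\; c_2^{\mathcal{PG}}(1) + 2(k+1) + 1 \;=\; 4 + 7 \;=\; 11.
\]
Unwinding the construction used in the proof of Lemma \ref{lem:Straightupperbound}, an explicit extremal graph on $12$ vertices is the disjoint union $K_7 \sqcup K_{1,4}$, which is perfect and admits no $2$-defective $2$-cocoloring.

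For the lower bound $c_2^{\mathcal{PG}}(2) \geq 11$, the task is to show that every perfect graph on $11$ vertices admits a $2$-defective $2$-cocoloring. The key structural observation is that $2$-density and $2$-sparsity are hereditary under induced subgraphs: if $S$ is $2$-defective in $G$ and $S' \subseteq S$, then $S'$ is also $2$-defective. Hence, if a perfect graph $G$ on $11$ vertices contains a $2$-defective $t$-set $S$ with $t \geq 7$, then $V(G) \setminus S$ has at most $4 = k+2$ vertices and is automatically $2$-defective, so $(S, V(G)\setminus S)$ is a valid cocoloring. I may therefore restrict attention to perfect graphs in $\mathcal{T}_{11}^{\mathcal{PG}}(2,7,7)$, for which the only bipartition sizes that can yield a cocoloring are $(5,6)$.

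Computationally, I would run Algorithm \ref{algo:CHECK} with $k = 2$ and $i = j = 7$, starting from $W = \{K_1\}$ and iterating recursively to produce $\mathcal{T}_{11}^{\mathcal{PG}}(2,7,7)$. Then, for each graph in this set, I would enumerate all $\binom{11}{5}$ candidate $(5,6)$-bipartitions and verify that at least one splits the vertex set into two $2$-defective parts. If every graph passes this check, the lower bound follows, and combined with the upper bound it gives $c_2^{\mathcal{PG}}(2) = 11$.

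The main obstacle I anticipate is the potential size of $\mathcal{T}_{11}^{\mathcal{PG}}(2,7,7)$, since $R_2^{\mathcal{PG}}(7,7)$ appears from Table \ref{table:2-defectiveperfect} to be comfortably larger than $11$, so the generated family could be substantial. However, the per-graph bipartition check is cheap (only $462$ splits to test, with a constant-time $2$-density/$2$-sparsity verdict on each part), and the generation phase benefits from the same early-elimination and incremental perfectness checks used throughout Section \ref{sec:perfect}, so the entire procedure should terminate within the time budget already used there.
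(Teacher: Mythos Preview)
Your upper bound is exactly the paper's argument. For the lower bound, your approach is logically sound but differs from the paper's. You reduce to graphs in $\mathcal{T}_{11}^{\mathcal{PG}}(2,7,7)$ and then exhaustively test $(5,6)$-bipartitions; the paper instead invokes $R_2^{\mathcal{PG}}(6,6)=10$ (from \cite{defectiveramseynumbers}) to guarantee that every perfect $11$-graph contains a $2$-defective $6$-set $S$, so that failure of the obvious split $(S,V\setminus S)$ forces $G[V\setminus S]$ to be one of the finitely many non-$2$-defective $5$-graphs. The computer search then runs over pairs $(A,B)$ with $A$ a $2$-defective perfect $6$-graph and $B$ a non-$2$-defective $5$-graph, together with all bipartite edge patterns between them, verifying that each resulting graph admits some $2$-defective $2$-cocoloring.

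The trade-off is exactly the one you flagged: your route reuses Algorithm~\ref{algo:CHECK} verbatim and needs no auxiliary Ramsey value, but must build all of $\mathcal{T}_{11}^{\mathcal{PG}}(2,7,7)$, which (since $R_2^{\mathcal{PG}}(6,7)=13$) is a genuinely large family at the edge of what incremental perfect-graph generation handles. The paper's decomposition via $R_2^{\mathcal{PG}}(6,6)$ avoids generating that family at the cost of enumerating cross-edge configurations between a small $A$ and a very small $B$. Both establish the theorem; the paper's choice is the lighter computation.
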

	
	\begin{proof}
	By Lemmas \ref{lem:c_k_1_G} and \ref{lem:Straightupperbound}, we have $c_2^{\mathcal{PG}}(2)\leq c_2^{\mathcal{PG}}(1)+7=11$. Let us now show that all perfect graphs on 11 vertices can be partitioned into two $2$-defective sets. Take a perfect graph $G$ on $11$ vertices. Since $R_2^{\mathcal{PG}}(6,6)=10$ from \cite{defectiveramseynumbers}, $G$ has a $2$-defective $6$-set, say $S$.  If $G-S$ is $2$-defective, we are done. So, assume $G-S$ is not 2-defective. Let $\mathcal{A}$ be the set of all perfect graphs on 6 vertices that are $2$-defective, and $\mathcal{B}$ be the set of all graphs on 5 vertices that are not $2$-defective.  Then the vertex set of $G$ can be partitioned into two parts, one belonging to $\mathcal{A}$ and the other one belonging to $\mathcal{B}$. Using a computer, we enumerated all possible graphs obtained as a combination of two graphs $A$ and $B$ for some $A\in \mathcal{A}$ and $B\in \mathcal{B}$. We checked all such graphs and concluded that they can all be partitioned into two 2-defective sets, hence the desired result. 
\end{proof}
	
	
Next, we investigate $c_1^{\mathcal{PG}}(3)$ and $c_3^{\mathcal{PG}}(2)$. For these parameters, we provide bounds with two or three possible values, leaving the computation of the exact value open.

\begin{theorem}\label{thm:c_bounds_PG}
We have $13\leq c_1^{\mathcal{PG}}(3)\leq 14$ and $12\leq c_3^{\mathcal{PG}}(2)\leq 14$.
\end{theorem}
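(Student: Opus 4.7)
The plan is to dispatch both upper bounds through Lemma \ref{lem:Straightupperbound}. The class $\mathcal{PG}$ is closed under disjoint union with any clique: an odd hole or an odd antihole on at least five vertices is connected, so any such obstruction in a disjoint union of two perfect graphs would have to lie inside a single component, and by the Strong Perfect Graph Theorem the disjoint union is again perfect. Applying Lemma \ref{lem:Straightupperbound} with $k=1$, $m=3$, together with $c_1^{\mathcal{PG}}(2)=7$ from Theorem \ref{thm:c_1_2_PG}, yields $c_1^{\mathcal{PG}}(3)\leq 7+3\cdot 2+1=14$. Applying it with $k=3$, $m=2$, together with $c_3^{\mathcal{PG}}(1)=5$ from Lemma \ref{lem:c_k_1_G}, yields $c_3^{\mathcal{PG}}(2)\leq 5+2\cdot 4+1=14$.

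For the lower bound $c_3^{\mathcal{PG}}(2)\geq 12$, the plan is to invoke Lemma \ref{lem:Straightlowerbound} with $k=3$, $m=2$, and $t=7$. Reading $R_3^{\mathcal{PG}}(7,7)=11$ from Table \ref{table:3-defectiveperfect} and using $c_3^{\mathcal{PG}}(1)=5$, the required hypothesis $c_3^{\mathcal{PG}}(m-1)+t\geq R_3^{\mathcal{PG}}(t,t)$ becomes $12\geq 11$, which holds, so the lemma delivers $c_3^{\mathcal{PG}}(2)\geq c_3^{\mathcal{PG}}(1)+7=12$. Pushing this strategy further (e.g., $t=8$) would require knowledge of $R_3^{\mathcal{PG}}(8,8)$, which we do not possess, so we stop at $12$.

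The remaining bound $c_1^{\mathcal{PG}}(3)\geq 13$ is the heart of the statement and, as foreshadowed in Section \ref{sec:algo}, requires computer assistance (the Julia implementation). My plan is the following reduction. Let $G$ be an arbitrary perfect graph on $13$ vertices. If $G$ contains a $1$-defective $6$-set $S$, then $G-S$ is a perfect graph on $7$ vertices; by Theorem \ref{thm:c_1_2_PG} it admits a $1$-defective $2$-cocoloring, and adjoining $S$ produces a $1$-defective $3$-cocoloring of $G$. Consequently, a hypothetical counterexample must lie in $\mathcal{T}_{13}^{\mathcal{PG}}(1,6,6)$; observe moreover that forbidding $1$-defective $6$-sets forbids all $1$-defective $r$-sets for $r\geq 6$, since any $6$-element subset of a $1$-sparse (respectively $1$-dense) set is itself $1$-sparse (respectively $1$-dense), so the single condition is enough.

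It therefore remains to generate $\mathcal{T}_{13}^{\mathcal{PG}}(1,6,6)$ by twelve recursive applications of Algorithm \ref{algo:CHECK} starting from $W=\{K_1\}$ with parameters $k=1$ and $i=j=6$, and then, for every graph in the output, to search exhaustively for a partition of the $13$ vertices into three $1$-defective blocks; if every such graph admits at least one valid partition, the bound follows. The expected main obstacle is purely computational: the value $R_1^{\mathcal{PG}}(6,6)$ is not among those we could establish in Table \ref{table:1-defectiveperfect}, so $\mathcal{T}_{13}^{\mathcal{PG}}(1,6,6)$ is potentially large, and the per-graph cost of enumerating partitions into three blocks (with a $1$-defectiveness test per block) is already substantial on $13$ vertices. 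This combination is precisely what forces porting the implementation from Python to Julia for this case.
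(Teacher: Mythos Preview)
Your upper bounds and the lower bound $c_3^{\mathcal{PG}}(2)\geq 12$ match the paper's argument exactly.

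For the lower bound $c_1^{\mathcal{PG}}(3)\geq 13$, your reduction is logically sound but differs from the paper's, and the paper's route is considerably sharper computationally. The paper does not attempt to enumerate $\mathcal{T}_{13}^{\mathcal{PG}}(1,6,6)$. Instead it exploits two exact results already in hand: since $R_1^{\mathcal{PG}}(5,5)=13$ (Table~\ref{table:1-defectiveperfect}), \emph{every} perfect graph $G$ on $13$ vertices contains a $1$-defective $5$-set $J$; and if $G$ is a counterexample, then $G-J$ is a perfect graph on $8$ vertices with no $1$-defective $2$-cocoloring, hence one of the $24$ explicit extremal graphs from Theorem~\ref{thm:c_1_2_PG}. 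Thus any counterexample decomposes as (one of $24$ fixed $8$-vertex graphs) $\cup$ (a $1$-defective $5$-set) with arbitrary cross-edges, and the paper enumerates exactly these combinations (in Julia; this is the month-long computation). Your proposal instead builds $\mathcal{T}_{13}^{\mathcal{PG}}(1,6,6)$ from scratch via Algorithm~\ref{algo:CHECK}; but note that the paper could not fill in $R_1^{\mathcal{PG}}(5,6)$, let alone $R_1^{\mathcal{PG}}(6,6)$, within its two-day budget, which strongly suggests that the intermediate sets $\mathcal{T}_{n}^{\mathcal{PG}}(1,6,6)$ explode well before $n=13$. So while your strategy is correct in principle, the paper's decomposition via $R_1^{\mathcal{PG}}(5,5)=13$ and the $24$ extremal graphs is what makes the computation tractable; your direct enumeration misses this leverage and is likely infeasible in practice.
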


\begin{proof}
 Using Lemma \ref{lem:Straightupperbound} and Theorem \ref{thm:c_1_2_PG}, we get $c_1^{\mathcal{PG}}(3)\leq c_1^{\mathcal{PG}}(2)+7=14$. For the lower bound, we note that $c_1(3)=12$ from \cite{defectiveparameterTinazAhu}, which implies $12\leq c_1^{\mathcal{PG}}(3)$. Then, we show by computer enumeration that  $c_1^{\mathcal{PG}}(3)\neq12$. Indeed, $c_1^{\mathcal{PG}}(3)=12$ would imply that there is a perfect graph $G$ on 13 vertices which cannot be partitioned into three $1$-defective sets. Besides, since $R_1^{\mathcal{PG}}(5,5)=13$ from \cite{1defectiveperfectTinazOylumJohn}, $G$ has a $1$-defective $5$-set , say $J$. Then $G-J$ is a perfect graph on 8 vertices which can not be 1-defectivelty cocolored with 2 colors. It follows from Theorem \ref{thm:c_1_2_PG} that $G-J$ is one of the 24 extremal graphs for $c_1^{\mathcal{PG}}(2)=7$. Hence, $G$ is obtained as a combination of a 1-defective 5-set and one of these 24 extremal graphs. We checked all possible combinations (which took more than a month) and concluded that all such graphs can be partitioned into three $1$-defective sets, which yields $c_1^{\mathcal{PG}}(3)\neq12$.

As for $c_3^{\mathcal{PG}}(2)$, we have $R_3^{\mathcal{PG}}(7,7)=11$ from \cite{defectiveramseynumbers} and $c_3^{\mathcal{PG}}(1)=5$ from Lemma \ref{lem:c_k_1_G}. Thus, the inequality $c_k^{\mathcal{PG}}(m-1)+t\geq R_k^{\mathcal{PG}}(t,t)$ holds for $k=3$, $m=2$ and $t=7$, which gives $12=c_3^{\mathcal{PG}}(1)+7\leq c_3^{\mathcal{PG}}(2)$ by Lemma \ref{lem:Straightlowerbound}. On the other hand, by using Lemmas \ref{lem:Straightupperbound} and \ref{lem:c_k_1_G}, we get $c_k^{\mathcal{PG}}(2)\leq c_k^{\mathcal{PG}}(1)+1+2(k+1)=3k+5$ for all $k$, which implies $c_3^{\mathcal{PG}}(2)\leq 14$.
\end{proof}

%

\subsection{Cographs} \label{sec:cococograph}
\textit{Cographs}, denoted by $\mathcal{CO}$, is the class of graphs containing no induced path on four vertices. We start by noting that for the classical case ($k=0$), we have $c_0^{\mathcal{CO}}(m)=c_0^{\mathcal{PG}}(m)$ since the extremal graph constructed in the proof of Theorem \ref{thm:c_0_m_PG} is also a cograph and since $\mathcal{CO}\subset \mathcal{PG}$ we have $c_k^{\mathcal{PG}}(m)\leq c_k^{\mathcal{CO}}(m)$ for all $k$ and $m$.
   
\begin{remark}\label{rem:c_0_CO}
We have $c_0^{\mathcal{CO}}(m)=\dfrac{m(m+3)}{2}$.
\end{remark} 

\noindent Secondly, we have $c_k^{\mathcal{CO}}(1)=k+2$ from Lemma \ref{lem:c_k_1_G}. Then, the next candidates for examination are $c_1^{\mathcal{CO}}(2)$ and $c_2^{\mathcal{CO}}(2)$. Both values can be found as a natural consequence of results in perfect graphs.

\begin{theorem}\label{thm:c_results_cographs}
We have $c_1^{\mathcal{CO}}(2)=8$ and $c_2^{\mathcal{CO}}(2)=11$.
\end{theorem}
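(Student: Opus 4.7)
My plan is to prove the two equalities in parallel by matching an upper bound coming from the general machinery in the previous subsection to an appropriate lower bound in each case.

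For the upper bounds I would combine the base case $c_k^{\mathcal{CO}}(1)=k+2$, which is immediate from Lemma \ref{lem:c_k_1_G} since every star $K_{1,k+2}$ is a cograph, with Lemma \ref{lem:Straightupperbound}, which applies because $\mathcal{CO}$ is closed under disjoint union with any clique. This gives $c_1^{\mathcal{CO}}(2)\le 3+2\cdot 2+1=8$ and $c_2^{\mathcal{CO}}(2)\le 4+2\cdot 3+1=11$. The extremal graphs arising from the proof of Lemma \ref{lem:Straightupperbound}, namely $K_5\cup K_{1,3}$ on $9$ vertices and $K_7\cup K_{1,4}$ on $12$ vertices, are themselves cographs, so the upper bounds are actually achieved inside $\mathcal{CO}$.

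For the lower bound $c_2^{\mathcal{CO}}(2)\ge 11$ I simply invoke the inclusion $\mathcal{CO}\subset\mathcal{PG}$ together with Theorem \ref{thm:c_2_2_PG}: every $11$-vertex cograph is perfect, and every $11$-vertex perfect graph admits a $2$-defective $2$-cocoloring. Combined with the upper bound this already yields $c_2^{\mathcal{CO}}(2)=11$.

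For $c_1^{\mathcal{CO}}(2)\ge 8$ the same inclusion only delivers $\ge 7$, since $c_1^{\mathcal{PG}}(2)=7$, so a cograph-specific argument is required to promote $7$ to $8$. My plan is to exploit the explicit list of $24$ extremal graphs for $c_1^{\mathcal{PG}}(2)=7$ supplied by Theorem \ref{thm:c_1_2_PG}: by construction, these are the \emph{only} $8$-vertex perfect graphs with no $1$-defective $2$-cocoloring. Since every $8$-vertex cograph is in particular an $8$-vertex perfect graph, it suffices to verify that none of the $24$ graphs is itself a cograph, equivalently that each of them contains an induced $P_4$. I would carry this out by running a short $P_4$-search against the stored list of extremal graphs in the authors' GitHub repository \cite{github}.

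The main obstacle is precisely this last $P_4$-inspection over the $24$ explicit graphs: it is mechanical but must be done carefully for each of them. A purely combinatorial alternative using the cograph decomposition (WLOG $G$ is disconnected by complement-invariance of $1$-defective cocolorings, then case analysis on the component sizes of $G$) would also prove $c_1^{\mathcal{CO}}(2)\ge 8$ in principle, but it branches quickly into many subcases, so the computational check against the list of $24$ extremal perfect graphs is noticeably cleaner.
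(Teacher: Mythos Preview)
Your proposal is correct and follows essentially the same route as the paper: the upper bounds come from Lemma~\ref{lem:Straightupperbound} together with $c_k^{\mathcal{CO}}(1)=k+2$, the lower bound for $c_2^{\mathcal{CO}}(2)$ comes from $\mathcal{CO}\subset\mathcal{PG}$ and Theorem~\ref{thm:c_2_2_PG}, and the lower bound for $c_1^{\mathcal{CO}}(2)$ comes from checking that none of the $24$ extremal graphs in Theorem~\ref{thm:c_1_2_PG} is a cograph. Your explicit identification of $K_5\cup K_{1,3}$ and $K_7\cup K_{1,4}$ as witnessing cographs for the upper bounds is a nice concrete addition, though redundant since Lemma~\ref{lem:Straightupperbound} already guarantees the extremal construction stays in the class.
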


\begin{proof}
Note that $c_1^{\mathcal{CO}}(2)\leq c_1^{\mathcal{CO}}(1)+5=8$ from Lemma \ref{lem:Straightupperbound}. Moreover, none of the extremal graphs in Theorem \ref{thm:c_1_2_PG} is a cograph, which implies $c_1^{\mathcal{CO}}(2)\geq 8$. On the other hand, since $c_2^{\mathcal{PG}}(2)=11$ from Theorem \ref{thm:c_2_2_PG}, we get $11\leq c_2^{\mathcal{CO}}(2)\leq c_2^{\mathcal{CO}}(1)+7=11$ by using Lemma \ref{lem:Straightupperbound}, so the result follows.
\end{proof}

\noindent Now, we have $c_0^{\mathcal{CO}}(2)=5$, $c_1^{\mathcal{CO}}(2)=8$ and $c_2^{\mathcal{CO}}(2)=11$. In what follows, we show that this trend holds for all $k\geq 3$.

%

\begin{theorem}
We have $c_k^{\mathcal{CO}}(2) = 3k+5$ for $k\geq 3$.
\end{theorem}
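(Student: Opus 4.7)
The upper bound follows directly from Lemma~\ref{lem:Straightupperbound}, since the class of cographs is closed under disjoint union with any clique (both being cographs). This gives $c_k^{\mathcal{CO}}(2) \leq c_k^{\mathcal{CO}}(1) + 2(k+1)+1 = (k+2)+(2k+3) = 3k+5$, using $c_k^{\mathcal{CO}}(1)=k+2$ from Lemma~\ref{lem:c_k_1_G}.

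For the matching lower bound, I plan to show that every cograph $G$ on $3k+5$ vertices admits a $k$-defective 2-cocoloring. Since complementation preserves both cographs and $k$-defective cocolorings (exchanging $k$-sparse with $k$-dense), and since at least one of $G$ or $\overline{G}$ is disconnected for any cograph on $\geq 2$ vertices, I may assume $G$ is disconnected. Write $G = G_1 \cup \cdots \cup G_r$ with component sizes $n_1 \geq \cdots \geq n_r$. When $n_1 \leq k+1$, every component has maximum degree at most $k$ and is therefore $k$-sparse, so any split of the $r \geq 2$ components into two non-empty groups gives two $k$-sparse classes and we are done. Otherwise $n_1 \geq k+2$, and $G_1$ is a connected cograph on $\geq 2$ vertices, hence decomposes as a join $G_1 = H + K$ with non-empty cographs $H,K$.

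In this main case, the plan is to build the cocoloring using the structural characterization that a subset $S \subseteq V(G_1)$ is $k$-dense in $G_1$ if and only if $S \cap V(H)$ is $k$-dense in $H$ and $S \cap V(K)$ is $k$-dense in $K$ (in a join, the only non-neighbors of a vertex of $H$ within $V(G_1)$ lie in $V(H)$). I will choose $A \subseteq V(G_1)$ to be a suitable $k$-dense subset of $G_1$ and $B = V(G) \setminus A$ to be the remainder; for $B$ to be $k$-sparse, $B \cap V(G_1)$ needs maximum degree $\leq k$ in $G_1$, while the tail $V(G) \setminus V(G_1)$ contributes no cross-edges and has max degree $\leq k$ within itself whenever $n_2 \leq k+1$. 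In the sub-case $n_2 \geq k+2$, the bound $3k+5 - 2(k+2) = k+1$ forces $n_3+\cdots+n_r \leq k+1$, so only $G_1$ and $G_2$ are large; I will then treat them symmetrically via their respective join decompositions and absorb the tail components into a $k$-sparse class.

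The hard part will be the boundary case where $G_1 = H+K$ has both $|H|$ and $|K|$ moderately large and neither $H$ nor $K$ is $k$-defective on its own, so the chosen $A$ must be a proper subset of $V(G_1)$ of precise $k$-dense structure on each side. Here I expect to use $|V(G_1)| \leq 3k+5$ to force $\min(|H|,|K|) \leq \lfloor (3k+5)/2 \rfloor$ and to iterate the join decomposition of $H$ or $K$ once more, producing enough flexibility to make $A$ $k$-dense and $V(G_1)\setminus A$ of maximum degree $\leq k$ in $G_1$ simultaneously; a parity/size bookkeeping argument using $k \geq 3$ ensures the two requirements can be met jointly. Combined with the upper bound, this yields $c_k^{\mathcal{CO}}(2) = 3k+5$ for all $k \geq 3$.
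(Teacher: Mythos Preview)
Your upper bound is correct and matches the paper. For the lower bound, however, your plan has a genuine gap: you commit throughout to a partition of the shape ``$A$ is $k$-dense, $B$ is $k$-sparse'', but this shape is not always achievable. Take $k=3$ and $G=K_8\cup K_6$, a cograph on $3k+5=14$ vertices. If $A$ is $3$-dense with $a_1=|A\cap K_8|$ and $a_2=|A\cap K_6|$ both positive, then every vertex of $A\cap K_8$ misses all $a_2$ vertices on the other side, forcing $a_2\le 3$, and symmetrically $a_1\le 3$; hence $|B\cap K_8|\ge 5$, which induces a $K_{\ge 5}$ and prevents $B$ from being $3$-sparse. The degenerate cases $a_1=0$ or $a_2=0$ leave all of $K_8$ (respectively $K_6$) inside $B$ and fail for the same reason. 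So no (dense,\,sparse) partition exists, yet $(K_8,K_6)$ is a valid $3$-defective $2$-cocoloring with \emph{both} parts $k$-dense. Your sentence about ``treating $G_1,G_2$ symmetrically via their join decompositions and absorbing the tail into a $k$-sparse class'' in the sub-case $n_2\ge k+2$ still insists on a sparse class and does not address this; and the ``hard part'' paragraph for the single-large-component case is only a hope (``I expect'', ``parity/size bookkeeping'') rather than an argument.

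The paper avoids this pitfall by a different mechanism. It first reduces to $\alpha_k(G),\omega_k(G)\le 2k+2$ (otherwise a $k$-defective $(2k+3)$-set plus the remaining $k+2$ vertices finishes the job), and then feeds in the known cograph defective Ramsey numbers from~\cite{defectiveramseynumbers}, namely $R_k^{\mathcal{CO}}(i,k+2)=i$ and $R_k^{\mathcal{CO}}(a+1,b+1)=a+b-k$ for $k+2\le a,b\le 2k+2$. These force, in the two-large-components case, that both components are $k$-dense (giving a (dense,\,dense) partition), and in the single-large-component case they yield size contradictions after one pass to the complement. If you want to push your join-decomposition idea through, you must at minimum allow both classes to be of the same type, and you will still need a quantitative input comparable to those Ramsey formulas to close the $t=1$ case; the bound $\min(|H|,|K|)\le\lfloor(3k+5)/2\rfloor$ alone does not supply it.
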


\begin{proof}
We first observe that we have $c_k^{\mathcal{CO}}(2)\leq c_k^{\mathcal{CO}}(1)+2(k+1)+1$ from Lemma \ref{lem:Straightupperbound}. Since $c_k^{\mathcal{CO}}(1)=k+2$, we obtain $c_k^{\mathcal{CO}}(2) \leq 3k+5$ for $k\geq 3$.

Let $G$ be a cograph of order $3k+5$. Since the complement of a cograph $G$ is also a cograph, and exactly one of $G$ or its complement $\overline{G}$ is disconnected \cite{cograph}, we assume without loss of generality that $G$ is disconnected. If $G$ has a $k$-defective $(2k+3)$-set, we are done since every $(k+2)$-set is $k$-defective. Therefore, assume $\alpha_k(G)\leq 2k+2$ and $\omega_k(G)\leq 2k+2$, and let $t$ be the number of connected components of $G$ whose size is at least $k+1$. If $t\geq 3$, then choosing $k+1$ vertices from each of these components forms a $k$-sparse $(3k+3)$-set, which is a contradiction. Besides, if $t=0$, then all vertices of $G$ is a $k$-sparse set. Thus, we need to examine two cases: $t=1$ and $t=2$.

Suppose $t=2$, and let $U$ and $V$ be the components of $G$ with $|U|\geq k+1$ and $|V|\geq k+1$. Now, if $G-(U\cup V)\neq \emptyset$, take a vertex $x\in G-(U\cup V)$ and choose $k+1$ vertices from each of $U$ and $V$. This forms a $k$-sparse $(2k+3)$-set, which is a contradiction. Hence, we can assume $G$ has only two components $U$ and $V$. Now, if one of $U$ and $V$ has a $k$-sparse $(k+2)$-set, without loss of generality say $U$, then a $k$-sparse $(k+2)$-set of $U$ together with $k+1$ vertices from $V$ form a $k$-sparse $(2k+3)$-set, a contradiction. Since  $R_k^{\mathcal{CO}}(i,k+2)=i$ for all $i\geq k+2$ (by \cite{defectiveramseynumbers}), both $U$ and $V$ are $k$-dense, and we are done.

Suppose $t=1$, and let $H$ be the connected component of $G$ with $|H|\geq k+1$. Since $G$ is disconnected, $G-H$ is non-empty, and it forms a $k$-sparse set. Moreover, if we can take a $k$-sparse set from $H$ and add it into $G-H$, we get a $k$-sparse set, too. Hence, we have $\alpha_k(H)+|G-H|\leq 2k+2$, which implies $\alpha_k(H)\leq |H|-k-3$. Similarly, consider the largest $k$-dense subset of $H$. If its size is at least $|H|-k-1$, we can add the remaining at most $k+1$ vertices of $H$ into $G-H$ to obtain a $k$-sparse set. This gives a $k$-defective 2-cocoloring of $G$, and we are done. So, assume $\omega_k(H)\leq |H|-k-2$. It follows that $\alpha_k(H)+\omega_k(H)\leq 2|H|-2k-5$, and since $|H|\leq 3k+4$, we have $\alpha_k(H)\leq 2k+1$ and $\omega_k(H)\leq 2k+2$. Since $k+2\leq \alpha_k(H)+1 \leq 2k+2$, if $\omega_k(H)\neq 2k+2$ then we have $k+2 \leq \omega_k(H)+1 \leq 2k+2$ and thus $R_k^{\mathcal{CO}}(\alpha_k(H)+1,\omega_k(H)+1)=\alpha_k(H)+\omega_k(H)-k$ from \cite{defectiveramseynumbers}. However, this gives $$1+|H|\leq R_k^{\mathcal{CO}}(\alpha_k(H)+1,\omega_k(H)+1)\leq 2|H|-3k-5,$$ implying that $|H| \geq 3k+6$, a contradiction with $|H|\leq 3k+4$. As a result, we have $\omega_k(H)=2k+2$, implying that $|H|-k-2\geq 2k+2$ and thus $|H|=3k+4$.

So, $H$ is a connected cograph on $3k+4$ vertices with $\omega_k(H)=2k+2$, and $G-H$ is an isolated vertex. Let us consider $\overline{G}$ which consists of the join of the disconnected cograph $F=\overline{H}$ having $\alpha_k(F)=2k+2$ with a single vertex, say $u$. We will show that $\overline{G}$ can be partitioned into two $k$-defective sets.

Let $s$ be the number of connected components of $F$ whose size is at least $k+1$. Since $\alpha_k(F)=2k+2$, we clearly get $s\in\{1,2\}$. If $s=2$, let $U_1$ and $V_1$ be components of $F$ with $|U_1|, |V_1|\geq k+1$. Since any subset of $k+1$ vertices from each of $U_1$ and $V_1$ form a $k$-sparse set and $\alpha_k(F)=2k+2$, we have $\alpha_k(U_1)=\alpha_k(V_1)=k+1$ and $F-(U_1\cup V_1)=\emptyset$. Again, by using $R_k^{\mathcal{CO}}(i,k+2)=i$ for all $i\geq k+2$ (by \cite{defectiveramseynumbers}), it follows that both $U_1$ and $V_1$ are $k$-dense. Since $u$ is adjacent to all vertices in $\overline{G}$, $U_1\cup \{u\}$ is $k$-dense too, thus $\overline{G}$ can be partitioned into two $k$-defective sets.

Suppose $s=1$ and let $A$ be the connected component of $F$ with $|A|\geq k+1$. Let $S$ be a largest $k$-sparse set in $A$, $K$ be a largest $k$-dense set in $A$, and write $B=S-A$. Notice that if $|A|-|S|\leq k+1$, then $(A-S)\cup \{u\}$ is $k$-dense and $S\cup B$ is $k$-sparse in $\overline{G}$. Similarly, if $|A|-|K|\leq k+1$, then $(A-K)\cup B$ is $k$-sparse and $K\cup \{u\}$ is $k$-dense. In both cases  $\overline{G}$ can be partitioned into two $k$-defective sets. Therefore, assume $|A|-\alpha_k(A),|A|-\omega_k(A)\geq k+2$, which gives $2|A|\geq \alpha_k(A)+\omega_k(A)+2k+4$. Moreover, since $B$ is nonempty, we get $|A|\leq 3k+3$ and so $k+1\leq \alpha_k(A),\omega_k(A)\leq 2k+1$. Hence, since $R_k^{\mathcal{CO}}(\alpha_k(A)+1,\omega_k(A)+1)=\alpha_k(A)+\omega_k(A)-k$ from \cite{defectiveramseynumbers}, we get $1+|A|\leq \alpha_k(A)+\omega_k(A)-k\leq 2|A|-3k-4$, which leads $|A|\geq 3k+5$ and so a contradiction. As a result, $\overline{G}$ can be partitioned into two $k$-defective sets, so can be $G$, we are done.
\end{proof}

\section{Conclusion}
In this work, we investigated the computation of defective Ramsey numbers and the parameter  $c_k^{\mathcal{G}}(m)$ in restricted graph classes. We obtained several results for perfect graphs, bipartite graphs, chordal graphs and cographs. Our approach combines efficient graph generation methods with classical direct proof techniques. We believe that the generic framework that we offer for efficiently generating structured graphs with desired properties is a promising approach in Ramsey theory, and more broadly in extremal graph theory. 

\section*{Acknowledgments}
The authors acknowledge the support of the The Scientific and Technological Research Council of Turkey (T{\"U}B\.{I}TAK) Grant no:118F397.

\section*{Declarations}

\paragraph{Funding}
This work has been supported by The Scientific and Technological Research Council of Turkey (T{\"U}B\.{I}TAK) Grant no:118F397.

\paragraph{Conflicts of interest/Competing interests}
The authors have no conflicts of interest to declare that are relevant to the content of this article.

\paragraph{Code availability}
The source codes are made available in public repositories, links to which are provided in the text and in the references.

\paragraph{Availability of data and material}
The extremal graphs output by our algorithms are made available in public repositories, links to which are provided in the text and in the references.

\end{document}